\documentclass [11pt,oneside]{amsart}

\reversemarginpar \textwidth=14cm \textheight=19cm

\usepackage{amssymb} \usepackage{amsfonts} \usepackage{amsmath}
\usepackage{amsthm} \usepackage{epsfig} 
\usepackage{tikz}
\usepackage[cmtip,arrow]{xy}
\usepackage{amsmath,amssymb,enumerate,pb-diagram,pb-xy}

\usepackage[applemac]{inputenc}
\input xy
\xyoption{all}

\usepackage{caption}

\addtolength{\captionmargin}{1cm}

\newtheorem{lemma}{Lemma}[section]
\newtheorem{teo}[lemma]{Theorem}
\newtheorem{prop}[lemma]{Proposition}
\newtheorem{cor}[lemma]{Corollary}

\theoremstyle{definition}
\newtheorem{defn}[lemma]{Definition}

\newtheorem{rem}[lemma]{Remark}

\newcommand{\matN}{\ensuremath {\mathbb{N}}}
\newcommand{\R} {\ensuremath {\mathbb{R}}}

\newcommand{\Z} {\ensuremath {\mathbb{Z}}}

\newcommand{\matH} {\ensuremath {\mathbb{H}}}

\newcommand{\vare} {\ensuremath{\varepsilon}}
\newcommand{\vol} {{\rm Vol}}
\newcommand{\vola} {{\rm Vol}_{\rm alg}}
\newcommand{\lamtil} {\widetilde{\Lambda}}
\newcommand{\lam}{\Lambda}
\newcommand{\mhat}{\widehat{M}}
\newcommand{\ext}{{\rm ext}}

\newcommand{\sm} {\ensuremath {{\rm sm}}}
\newcommand{\str} {\ensuremath {{\rm str}}}

\newcommand {\bb} {\partial}

\author{Roberto Frigerio}
\author{Cristina Pagliantini}

\address{Dipartimento di Matematica \\
Universit\`a di Pisa \\
Largo B.~Pontecorvo 5 \\
56127 Pisa, Italy}

\email{frigerio@dm.unipi.it, pagliantini@mail.dm.unipi.it}

\title[Simplicial volume of bounded hyperbolic manifolds]{The simplicial volume of hyperbolic manifolds\\ 
with geodesic boundary}

\subjclass[2000]{53C23 (primary); 57N16, 57N65 (secondary)}

\keywords{Gromov norm, Straight simplex, Hyperbolic volume, Haar measure, Volume form.}

\thanks{}

\begin{document}

\begin{abstract}
Let $n\geq 3$, let $M$ be 
an orientable complete finite volume hyperbolic $n$-manifold
with compact (possibly empty) geodesic boundary, and let $\vol (M)$ and $\| M\|$
be the Riemannian volume and the simplicial volume of $M$.
A celebrated result by Gromov and Thurston 
states that if $\bb M=\emptyset$ then
$\vol (M)/ \|M\|=v_n$, where
$v_n$ is the volume of the 
regular ideal geodesic $n$-simplex in hyperbolic $n$-space.
On the contrary, Jungreis and Kuessner proved that if $\bb M\neq \emptyset$ 
then $\vol (M)/\|M\|<v_n$.  

We prove here that for every $\eta>0$ there exists $k>0$ (only depending on $\eta$ and $n$)
such that if $\vol (\partial M)/ \vol (M)\leq k$, then $\vol (M)/\|M\|\geq v_n-\eta$. 
As a consequence we show that for every $\eta>0$ there exists
a compact orientable hyperbolic $n$-manifold
$M$ with non-empty geodesic boundary such that $\vol (M)/\|M\|\geq v_n-\eta$.

Our argument also works in the case of empty boundary, thus providing a somewhat new proof
of the proportionality principle for non-compact finite-volume hyperbolic $n$-manifolds without boundary.
\end{abstract}

\maketitle

\section{Preliminaries and statements}\label{construction:section}
Let $X$ be a topological space, let $Y\subseteq X$ be a (possibly empty) subspace of $X$, and let
$R$ be a ring (in the present paper only the cases $R=\R$ and $R=\Z$ will be considered).
For $i\in\matN$ we denote  by $C_i(X;R)$ the module
of singular $i$-chains over $R$, \emph{i.e.}~the $R$-module
freely generated by the set $S_i (X)$ of singular $i$-simplices with values in $X$. 
The natural inclusion of $Y$ in $X$ induces an inclusion of $C_i (Y;R)$ into $C_i (X;R)$,
so it makes sense to define 
$C_i (X,Y;R)$ as the quotient space $C_i (X;R)/C_i (Y;R)$ (of course, if $Y=\emptyset$ we get $C_i (X,Y;R)=
C_i (X,R)$). The usual differential of the complex $C_\ast (X;R)$ defines
a differential $d_\ast\colon C_{\ast} (X,Y;R)\to C_{\ast -1} (X,Y;R)$. The homology of the resulting
complex is the usual relative singular homology of the topological pair $(X,Y)$, and will be denoted
by $H_\ast (X,Y;R)$.

In what follows, we will denote simply by $C_i (X)$, $C_i (X,Y)$ respectively the modules
$C_i (X;\R)$, $C_i (X,Y;\R)$.
The $\R$-vector space $C_i (X,Y)$
can be endowed with the following natural $L^1$-norm:  
if $\alpha\in C_i (X,Y)$, then
$$
\|\alpha\|=\|\alpha\|_1 =\inf \left\{\sum_{\sigma\in S_i (X)} |a_\sigma|\, ,\, {\rm where}\ 
\alpha=\left[\sum_{\sigma\in S_i (X)} a_\sigma \sigma\right]\ {\rm in}\ C_i(X)/C_i (Y) \right\}.
$$ 
Such a norm
descends to a seminorm on $H_\ast (X,Y)$, which is defined as follows:
if $[\alpha]\in H_i (X,Y)$, then
$$
\|[\alpha ]\|  =  \inf \{\|\beta \|,\, \beta\in C_i (X,Y),\, d\beta=0,\, [\beta ]=[\alpha ] \}
$$
(note that such a seminorm can be null on non-zero elements of $H_\ast (X,Y)$).

\subsection{Simplicial volume}
Throughout the whole paper, every manifold is assumed to be connected and orientable.
If $M$ is a compact $n$-manifold with (possibly empty) boundary $\bb M$, 
then we denote by $[M]_\mathbb{Z}$ a generator
of $H_n (M,\bb M; \mathbb{Z})\cong \mathbb{Z}$. Such a generator is usually known as the
\emph{fundamental class} of the pair $(M,\bb M)$.
The inclusion $\mathbb{Z} \hookrightarrow \R$ induces a map
$l\colon H_n (M,\bb M; \mathbb{Z})\hookrightarrow H_n (M,\bb M; \R)=H_n (M,\bb M)\cong\R$, and we
set $[M]_\R= l ([M]_\mathbb{Z})$. The following definition is due to Gromov~\cite{Gromov, Thurston}:

\begin{defn}
The \emph{simplicial volume} of $M$ is 
$\|M\|= \|[M]_\R\|$. 
\end{defn}

Since continuous maps induce norm non-increasing maps on singular chains and a homotopy equivalence
of pairs $f\colon (M,\bb M)\to (N,\bb N)$ between $n$-manifolds maps the fundamental class of 
$M$ into the fundamental class of $N$,
it is readily seen that the simplicial volume of a compact  manifold $M$
is a homotopy invariant of the pair $(M,\bb M)$.

As Gromov pointed out in his seminal work~\cite{Gromov}, 
even if it depends only on the homotopy type of a manifold, the simplicial volume is deeply related
to the geometric structures that a manifold can carry. For example, closed manifolds which support 
negatively curved Riemannian metrics have non-vanishing simplicial volume, while the simplicial 
volume of flat or spherical manifolds is null (see \emph{e.g.}~\cite{Gromov}). 
Even if several vanishing and non-vanishing results
for the simplicial volume
have been established (see \emph{e.g.}~\cite{Gromov, Lafont, Bucher2}), it is maybe worth mentioning
that, as far as the authors know, 
the exact value of non-null simplicial volumes is known at the moment only in the following 
cases:
a celebrated result by Gromov and 
Thurston (see Theorem~\ref{closed:teo} below) computes the simplicial volume of closed (and cusped)
hyperbolic manifolds, and the simplicial volume of the product of compact orientable surfaces
has been recently determined in~\cite{Bucher3}.

\subsection{The simplicial volume of hyperbolic manifolds}
Let $n\geq 3$. 
Throughout the whole paper, by \emph{hyperbolic $n$-manifold} we will mean 
a complete finite-volume 
hyperbolic $n$-manifold $M$ with compact (possibly empty)
geodesic boundary. 
We recall that if $M$ is cusped, \emph{i.e.}~if it is non-compact, 
then $M$ naturally compactifies to a manifold with boundary
$\overline{M}$ obtained by adding to $M$ a finite number of boundary
$(n-1)$-manifolds supporting 
a flat structure (see Subsection~\ref{N1:sub} below).

Let $v_n$ be the supremum of volumes of geodesic $n$-simplices
in hyperbolic $n$-space $\matH^n$. It is well-known~\cite{MM,MM2} that $v_n$ equals in fact the volume
of the geodesic regular ideal $n$-simplex. 
The following result is due to 
Thurston and Gromov~\cite{Thurston, Gromov}
(detailed proofs can be found in~\cite{BePe} for the closed case, and in~\cite{stefano,Kuessner}
for the cusped case):

\begin{teo}[Gromov, Thurston]\label{closed:teo}
Suppose $M$ is a hyperbolic $n$-manifold without boundary.
Then $\|\overline{M}\|\neq 0$ and 
$$
\frac{\vol (M)}{\|\overline{M}\|}=v_n.
$$
\end{teo}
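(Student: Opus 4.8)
The plan is to establish the two inequalities $\vol(M)\leq v_n\|\overline{M}\|$ and $\vol(M)\geq v_n\|\overline{M}\|$ separately; together they give the asserted equality, and the first one already forces $\|\overline{M}\|\geq \vol(M)/v_n>0$, hence the non-vanishing. Throughout I would write $M=\matH^n/\Gamma$ for a torsion-free lattice $\Gamma<\Isom^+(\matH^n)$ and exploit the hyperbolic volume form $\omega$ on $M$, whose integral over $M$ is $\vol(M)$.

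For $\vol(M)\leq v_n\|\overline{M}\|$ I would use \emph{straightening}. Given any real relative fundamental cycle $z=\sum_i a_i\sigma_i$ representing $[\overline{M}]_\R\in H_n(\overline{M},\bb\overline{M})$, I would replace each $\sigma_i$ by its geodesic straightening $\str(\sigma_i)$, obtained by lifting to $\matH^n$ and taking the geodesic simplex on the same vertices. Straightening is a chain map, chain-homotopic to the identity, that carries chains in the boundary to chains in the boundary, so $\str(z)$ is again a relative fundamental cycle. Pairing with $\omega$ then gives
\[
\vol(M)=\langle\omega,[\overline{M}]_\R\rangle=\sum_i a_i\,\vola(\str(\sigma_i)),
\]
where $\vola$ denotes the algebraic (signed) volume. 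Since $v_n$ is by definition the supremum of volumes of geodesic simplices, $|\vola(\str(\sigma_i))|\leq v_n$ for every $i$, whence $\vol(M)\leq v_n\sum_i|a_i|$; taking the infimum over all fundamental cycles yields the claim. In the cusped case the delicate point is to check that this pairing genuinely recovers the full volume: one must verify, via a Stokes-type argument on the ends, that no volume leaks out along the cusps and that the straightened boundary simplices really land in (a collar of) $\bb\overline{M}$.

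For the reverse inequality $\vol(M)\geq v_n\|\overline{M}\|$, which is the heart of the matter, I would use the \emph{smearing} construction. Fix a positively oriented regular ideal geodesic simplex $\Delta_0\subset\matH^n$ of volume $v_n$, parametrized by $\sigma_0\colon\Delta^n\to\matH^n$, and let $\mu$ be a Haar measure on $G=\Isom^+(\matH^n)$, normalized so that the total mass of the induced measure on $\Gamma\backslash G$ equals $\vol(M)$. Pushing $\mu$ forward under $\Gamma g\mapsto \pi\circ g\circ\sigma_0$ (with $\pi\colon\matH^n\to M$ the covering projection) produces a measure chain on $M$; after symmetrizing over the orientation-preserving symmetries of $\Delta_0$ its boundary cancels, so it is a measure cycle whose pairing with $\omega$ equals $v_n\vol(M)$. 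Since top-dimensional homology is detected by $\omega$, rescaling by $1/v_n$ yields a real fundamental cycle of mass $\vol(M)/v_n$. To convert this into a bound on the genuine simplicial volume I would invoke the isometric identification of measure homology with singular homology, or equivalently approximate the smeared cycle by honest singular cycles of arbitrarily close norm, concluding $\|\overline{M}\|\leq \vol(M)/v_n$.

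The main obstacle is the cusped case throughout both halves, and in particular the relative, non-compact version of the smearing argument: one must ensure that the smeared cycle is compatible with the compactification $\overline{M}$, that its boundary can be pushed into the flat boundary $\bb\overline{M}$ so that it is a genuine \emph{relative} cycle, and that the passage from measure chains to singular chains survives in this relative setting. Controlling the geometry of the smeared simplices uniformly near the cusps—so that almost all of the mass sits on simplices close to regular and ideal—is exactly the kind of quantitative estimate on which the quantitative results of the paper presumably rest, and I would expect it to be the technically hardest step.
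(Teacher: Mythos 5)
Your outline is correct and is essentially the classical Gromov--Thurston argument, but for the crucial inequality $\|\overline{M}\|\leq \vol(M)/v_n$ it takes a genuinely different route from the one this paper uses. The paper cites the literature for Theorem~\ref{closed:teo} and observes that its own machinery (Sections~\ref{smearing:sec}--\ref{main:sec}), run with $\bb M=\emptyset$, yields a new proof in the cusped case. That machinery is a \emph{discrete} smearing in the spirit of L\"oh--Sauer: one fixes a regular simplex $\tau_L^{\pm}$ of \emph{finite} edgelength $L$, discretizes $\matH^n$ by a $\Gamma$-net, and uses the Haar measure of the sets $\Omega_\sigma^{\pm}\subseteq\Gamma\backslash G$ to produce coefficients $a_\sigma=(b_\sigma^+-b_\sigma^-)/2$ of an honest \emph{finite} singular relative cycle $\zeta_{L,\vare}$ supported near the compact core $M_\vare$; its norm and its pairing with the volume cocycle are then estimated directly (Lemmas~\ref{b:lemma} and~\ref{l1norm:lemma}, Proposition~\ref{stimadiff:prop}), and one lets $L\to\infty$ and $\vare\to 0$ only at the very end. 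You instead smear an \emph{ideal} regular simplex over $\Gamma\backslash G$ to get a measure cycle and then invoke the isometric identification of measure homology with singular homology. Both routes work: yours produces the constant $v_n$ in one shot, but at the price of the measure-homology comparison theorem, which in the relative non-compact setting is exactly the technical machinery the paper is designed to avoid; the paper's route never leaves ordinary singular chains and degenerates gracefully to the quantitative Theorem~\ref{main:teo} when $\bb M\neq\emptyset$. One small inaccuracy in your sketch: the boundary of the smeared chain does not cancel by ``symmetrizing over the orientation-preserving symmetries of $\Delta_0$''. The cancellation of codimension-one faces requires antisymmetrizing between $\Delta_0$ and its mirror image, because the isometry matching the two simplices adjacent to a common face is the \emph{orientation-reversing} reflection in that face, combined with unimodularity/right-invariance of the Haar measure; this is precisely the $g\mapsto g s_n g_-$ computation in the proof of Proposition~\ref{ciclo:prop}, and it is why the paper introduces both $\tau_L^+$ and $\tau_L^-=g_-\circ\tau_L^+$ with opposite signs.
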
 

A different result holds for 
hyperbolic manifolds with \emph{non-empty} geodesic boundary:

\begin{teo}[\cite{Jung, Kuessner2}]\label{jung:teo}
Let $M$ be a hyperbolic $n$-manifold with non-empty
geodesic boundary. Then $\|\overline{M}\|\neq 0$ and
$$
\frac{\vol (M)}{\|\overline{M}\|}<v_n.
$$ 
\end{teo}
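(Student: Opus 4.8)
The inequality $\vol(M)/\|\overline M\|\le v_n$ together with $\|\overline M\|\neq 0$ is obtained exactly as in the closed case, and I would treat it first. Fix a straightening operator $\str$ sending each singular simplex to the geodesic simplex on the same ordered vertices (lifted to $\matH^n$); it is chain homotopic to the identity and does not increase the $\ell^1$-norm, so it carries a relative fundamental cycle $z=\sum_\sigma a_\sigma\sigma$ to a straight one of no larger norm. The Riemannian volume form $\omega_M$ defines a relative $n$-cocycle, since it pulls back to zero on simplices contained in the $(n-1)$-dimensional set $\bb M$, and pairing it with the fundamental class gives
\[
\vol(M)=\langle[\omega_M],[M]_\R\rangle=\sum_\sigma a_\sigma\,\vola(\str\sigma),
\]
where $\vola(\str\sigma)=\int_{\Delta^n}(\str\sigma)^\ast\omega_M$ is the signed volume of a geodesic $n$-simplex and hence satisfies $|\vola(\str\sigma)|\le v_n$. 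Thus $\vol(M)\le v_n\sum_\sigma|a_\sigma|$, and passing to the infimum over all fundamental cycles yields $\vol(M)\le v_n\|\overline M\|$; in particular $\|\overline M\|\ge\vol(M)/v_n>0$.

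To upgrade this to a \emph{strict} inequality I would argue by contradiction, assuming $\vol(M)=v_n\|\overline M\|$ and extracting a limiting ``regular ideal'' cycle. Choose straight relative fundamental cycles $z_k=\sum_\sigma a_{\sigma,k}\sigma$ with $\|z_k\|\to\|\overline M\|$. Writing $\vol(M)=\sum_\sigma a_{\sigma,k}\vola(\str\sigma)\le v_n\|z_k\|\to\vol(M)$ shows that the inequalities become asymptotically sharp, which forces almost all of the $\ell^1$-mass of $z_k$ to concentrate on positively oriented simplices whose volume tends to $v_n$, i.e.\ on nearly regular ideal simplices (using that the regular ideal simplex is, up to isometry, the unique maximizer of volume). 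Encoding the $z_k$ as signed measures on the space of straight $n$-simplices of $M$ and controlling escape of mass by the finiteness of $\vol(M)$, I would extract a weak limit: a nonzero measure cycle $\mu$, supported on regular ideal simplices, representing the relative fundamental class, and therefore with boundary $\partial\mu$ supported in $\bb M$.

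The contradiction, and the geometric heart of the argument, is that such a $\mu$ cannot exist in the presence of a geodesic boundary. In the universal cover $\widehat M\subset\matH^n$ (an intersection of half-spaces bounded by the geodesic hyperplanes covering $\bb M$), a regular ideal simplex has codimension-one faces that are themselves regular ideal simplices, generically transverse to the bounding hyperplanes and not contained in them; hence for a positive-measure set of positions the faces of the simplices in the support of $\mu$ do not lie in $\bb M$. Consequently $\partial\mu$ cannot be supported in $\bb M$ unless $\partial\mu=0$, in which case $\mu$ would be an absolute cycle representing a class in $H_n(M;\R)=0$, contradicting $\langle[\omega_M],[\mu]\rangle=\vol(M)\neq0$. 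Either way the assumed limiting cycle is impossible, so equality fails and $\vol(M)/\|\overline M\|<v_n$.

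I expect the last two steps to be the main obstacles. Making the measure-chain compactness rigorous requires choosing the correct topology on the space of straight simplices and ruling out escape of mass in a way compatible with the relative boundary condition; and the incompatibility $\partial\mu\subseteq\bb M\Rightarrow\mu=0$ must be extracted from a careful analysis of how regular ideal simplices, forced to sit inside the compact part adjacent to a bounding hyperplane, meet that hyperplane and the sphere $\bb\matH^n$. Finally, the case in which $M$ is cusped is reduced to the compact one by passing to the compactification $\overline M$ and truncating the cusps, since neither $\vol(M)$ nor the relevant norms are affected in the truncation limit.
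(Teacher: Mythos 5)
First, a remark on context: the paper does not prove this statement at all --- it is quoted from Jungreis and Kuessner --- so there is no in-paper proof to compare against; your proposal has to stand on its own. The first half of your argument (straightening plus the bounded volume cocycle gives $\vol(M)\le v_n\,\|\overline M\|$, hence $\|\overline M\|\neq 0$) is correct and standard, modulo the usual care in the cusped case, where one must work in $\widehat M$ relative to the exterior of a truncation rather than in $\overline M$ itself (exactly as the paper does in Section~4 with $\Omega_{\mhat}$ and Lemma~\ref{duality:lemma}).

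The strict inequality is the entire content of the theorem, and there your argument has a genuine gap at the decisive step. Granting the (nontrivial, but achievable) extraction of a limiting measure cycle $\mu$ supported on regular ideal simplices, you claim that since the faces of such simplices are ``generically transverse'' to the hyperplanes covering $\partial M$, the relative cycle condition forces $\partial\mu=0$. But $\partial\mu$ is a \emph{signed} measure computed with massive cancellation among the $n+1$ face maps and between the positive and negative parts of $\mu$; the fact that individual faces do not lie in $\partial M$ says nothing about the support of $\partial\mu$ after cancellation --- indeed, in the closed case it is precisely such cancellation that makes the smeared chain a cycle. A concrete symptom that the heuristic cannot suffice: your argument nowhere uses $n\ge 3$, yet for $n=2$ the statement is false (a hyperbolic pair of pants has $\vol=2\pi$, $\|\cdot\|=2$, so $\vol/\|\cdot\|=\pi=v_2$), even though ideal triangles meet the boundary geodesics just as ``generically.'' The missing idea is the rigidity step of Jungreis/Kuessner: the condition that $\partial\mu$ vanish outside the simplices of $\partial M$ forces $\mu$, viewed as a measure on $\Gamma\backslash\mathrm{Isom}(\matH^n)$, to be invariant under the maps induced by reflections in the faces of the regular ideal simplex, and for $n\ge 3$ an ergodicity/density argument then shows $\mu$ must be a multiple of the Haar (smearing) measure --- whose boundary is visibly \emph{not} supported in $\partial M$ when $\partial M\neq\emptyset$. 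Without that step (which is exactly where $n\ge 3$ enters, since for $n=2$ the reflection group of the ideal triangle is discrete), the dichotomy you propose does not close, and the proof is incomplete.
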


In this paper we show how 
to control the gap between ${\vol (M)}/\|\overline{M}\|$ and $v_n$
in terms of the ratio between the $(n-1)$-dimensional volume of $\partial M$ and the 
$n$-dimensional volume of $M$.
More precisely, in Section~\ref{main:sec} we prove the following:

\begin{teo}\label{main:teo}
Let $\eta>0$. Then there exists $k>0$ depending only on $\eta$ and $n$ such that
the following result holds:
if $M$ is a hyperbolic $n$-manifold with non-empty
geodesic boundary such that 
$$
\frac{\vol (\bb M)}{\vol (M)} \leq k,
$$
then 
$$
\frac{\vol (M)}{\|\overline{M}\|} \geq v_n - \eta.
$$
\end{teo}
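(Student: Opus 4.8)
The plan is to establish the equivalent upper bound $\|\overline{M}\|\leq \vol(M)/(v_n-\vare)$ for a suitable $\vare=\vare(\eta)$, by exhibiting an efficient real relative fundamental cycle for $(\overline{M},\bb\overline{M})$. Recall first the easy inequality: by Thurston's straightening, every relative fundamental cycle $\sum a_i\sigma_i$ satisfies $\vol(M)=\sum a_i\int_{\sigma_i}\omega\leq v_n\sum|a_i|$, where $\omega$ is the volume form, so $\vol(M)/\|\overline{M}\|\leq v_n$ always; the content is the reverse estimate, for which I must produce fundamental cycles supported on straight simplices of volume close to $v_n$ and of total $L^1$-mass close to $\vol(M)/v_n$. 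I would work in the universal cover, writing $M=C/\Gamma$ with $C\subseteq\matH^n$ a closed convex subset whose frontier is a union of geodesic hyperplanes and on which $\Gamma=\pi_1(M)$ acts by isometries, and carry out the construction in measure homology, transferring the resulting norm estimate to singular homology via L\"oh's isometric isomorphism between measure and singular homology.

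The engine is a \emph{smearing} argument adapted to the boundary. Fix $\vare=\eta/2$ and choose once and for all a compact straight $n$-simplex $\sigma_0\subset\matH^n$, close to the regular ideal one, with $\vol(\sigma_0)\geq v_n-\vare$ and diameter $d=d(\vare)$. Let $\mu$ be the Haar measure on $G=\Isom^+(\matH^n)$, normalized through the frame bundle so that placements of $\sigma_0$ correspond to frames of $M$. Unlike in the closed case, $\Gamma$ is not a lattice and $\Gamma\backslash G$ has infinite mass, so I smear only over the set $A\subseteq\Gamma\backslash G$ of isometries $g$ for which the whole translate $g\sigma_0$ lies inside $C$, and define the measure chain $z$ by pushing these translates down to $M$. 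Every simplex occurring in $z$ is then a genuine straight simplex of $M$ of volume $\geq v_n-\vare$, the mass of $z$ equals $\mu(A)$, and pairing with the volume form gives $\langle z,\omega\rangle=\mu(A)\vol(\sigma_0)$. Since a frame based at a point at distance $\geq d$ from $\bb M$ always yields a simplex fitting in $C$, one has $\mu(A)\geq c\,(\vol(M)-\vol(U_d))$, where $U_d$ is the width-$d$ collar of $\bb M$ and $\vol(U_d)\leq f(d)\,\vol(\bb M)$ for an explicit $f$ depending only on $d$ and $n$.

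The point where the boundary enters is that $z$ is no longer a cycle: the cancellation that kills $\partial(\mathrm{smear}\,\sigma_0)$ in the closed case, coming from pairing each $(n-1)$-face of $\sigma_0$ with the oppositely oriented face of the adjacent translate, now fails exactly at those placements where one of the two adjacent copies falls out of $C$. Hence $\partial z=\mathrm{smear}_A(\partial\sigma_0)$ is supported on $(n-1)$-simplices lying within distance $d$ of $\bb M$, and its mass is controlled by the measure of such marginal placements, giving the crucial estimate $\|\partial z\|\leq C_1(\vare)\,\vol(\bb M)$. I would then straighten and cone this boundary defect into $\bb\overline{M}$ by a chain $w$ supported near $\bb M$, with $\|w\|\leq C_2(\vare)\,\|\partial z\|\leq C_3(\vare)\,\vol(\bb M)$, so that $z+w$ is a genuine \emph{relative} cycle for $(\overline{M},\bb\overline{M})$.

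It remains to collect the estimates. Writing $V=\vol(M)$ and $B=\vol(\bb M)$, since $H_n(\overline{M},\bb\overline{M};\R)$ is one-dimensional the relative cycle $z+w$ automatically represents the multiple $\langle z+w,\omega\rangle/V=(\mu(A)\vol(\sigma_0)+O(B))/V$ of $[M]_\R$, while it has mass $\leq\mu(A)+C_3(\vare)B$; using $\mu(A)\geq c\,(V-f(d)B)$ and dividing yields
\[
\frac{\vol(M)}{\|\overline{M}\|}\ \geq\ (v_n-\vare)\cdot\frac{1-O(B/V)}{1+O(B/V)}.
\]
All implied constants depend only on $\vare$ and $n$ (through $d=d(\vare)$), so after fixing $\vare=\eta/2$ I may choose $k=k(\eta,n)$ so small that $\vol(\bb M)/\vol(M)\leq k$ forces the last factor to exceed $1-\eta/(2v_n)$, giving $\vol(M)/\|\overline{M}\|\geq v_n-\eta$. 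The main obstacle is the boundary-defect estimate $\|\partial z\|\leq C_1(\vare)\vol(\bb M)$: one must show the failure of cancellation is genuinely localized in a collar of $\bb M$ and integrate the marginal placements uniformly in the fixed but large simplex $\sigma_0$. In the cusped case the thin parts are handled exactly as in the boundaryless proportionality principle for cusped manifolds, so that the same argument covers $\bb M=\emptyset$ and reproves that case.
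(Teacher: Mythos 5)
Your proposal follows the same basic strategy as the paper: smear a large regular simplex over $G=\Isom^+(\matH^n)$ using the Haar measure, observe that the only loss of efficiency comes from placements near $\bb M$, bound that loss by $\vol(N_L(\bb M))\leq g(L)\vol(\bb M)$ via the covering associated to each boundary component, and assemble the final estimate $(v_n-\vare)\cdot\frac{1-O(B/V)}{1+O(B/V)}$, which is exactly the paper's inequality~(\ref{last}). The architecture differs in two respects. First, the paper avoids measure homology entirely: it discretizes the smear over a $\Gamma$-net of cells of diameter at most $1$ (following L\"oh--Sauer), producing a finite singular chain $\sum_\sigma a_\sigma\sigma$ directly. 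This matters, because the step you lean on --- transferring a measure cycle for the \emph{pair} $(\overline{M},\bb\overline{M})$ into a singular relative cycle of the same norm --- is not covered by L\"oh's isometry theorem, which is an absolute statement; the relative version needs separate justification, and sidestepping that technicality is precisely why the paper works with the discrete net. Second, where you restrict the smearing domain to placements entirely inside the convex core and then repair the boundary defect $\bb z$ by a coning chain $w$, the paper keeps all placements meeting the interior of the truncated manifold $M_\vare$, shows the smeared chain is an honest relative cycle modulo $\ext(M_\vare)$ (Proposition~\ref{ciclo:prop}: the cancellation over interior faces is exact, by right-invariance of $\mu_G$ and convexity of the components of $\ext(\widetilde{M}_\vare)$), and then simply retracts onto $(M_\vare,\bb M_\vare)$ --- no correction chain is needed.

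One point in your sketch is genuinely imprecise and must be fixed for the argument to close. The chain $z=\mathrm{smear}_A(\sigma_0)$ as you define it does \emph{not} have boundary supported near $\bb M$: the face-by-face cancellation you invoke pairs each face of $g\sigma_0$ with the corresponding face of the \emph{reflected} adjacent copy, and that copy is a placement of the mirror simplex $\bar\sigma_0$, which does not occur in the smear of $\sigma_0$ alone. You therefore need the antisymmetrized chain $\tfrac12\bigl(\mathrm{smear}_{A^+}(\sigma_0)-\mathrm{smear}_{A^-}(\bar\sigma_0)\bigr)$ --- the analogue of the paper's coefficients $a_\sigma=(b^+_\sigma-b^-_\sigma)/2$. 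Only for this chain is $\bb z$ concentrated on marginal placements (those where exactly one of the two adjacent copies leaves the convex core), and only then does the key bound $\|\bb z\|\leq C(d)\,\vol(\bb M)$ hold. With that correction, with the prism/coning chain $w$ constructed equivariantly in the universal cover (pushing each marginal face to the boundary hyperplane it is close to), and with the cusps handled by truncation as in the paper, your estimates do assemble into a proof of the theorem.
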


It is not difficult to show that for every $n\geq 3$ there exist compact
hyperbolic $n$-manifolds
with non-empty disconnected geodesic boundary (see for example~\cite[Example 2.8.C]{GroPia}).
Let $M$ be one such manifold, choose one connected component $B_0$ of $\bb M$ and let
$M'$ be the manifold obtained by mirroring $M$ along $\bb M\setminus B_0$, so
$\bb M'$ is isometric to two copies of $B_0$. For $i\geq 1$, we inductively construct $M_i$ by setting
$M_1=M'$ and defining $M_{i+1}$ as the manifold obtained by isometrically gluing one component
of $\bb M_i$ to one component of $\bb M_1$. It is readily seen that $M_i$ is a compact hyperbolic
$n$-manifold with non-empty geodesic boundary such that $\vol (M_i)=2i \vol (M)$ and
$\vol (\bb M_i)=2\vol (B_0)$. We have therefore $\lim_{i\to\infty} \vol (\bb M_i)/\vol (M_i)=0$.
Together with our main theorem, this readily implies the following:

\begin{cor}\label{cor:cor}
For every $\eta>0$, a compact  hyperbolic $n$-manifold $M$ with non-empty
geodesic boundary exists such that
$$
v_n > \frac{\vol (M)}{\|\overline{M}\|} \geq v_n-\eta.
$$
\end{cor}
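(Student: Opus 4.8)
The plan is to combine the explicit family of manifolds constructed in the paragraph immediately preceding the statement with Theorem~\ref{main:teo} and Theorem~\ref{jung:teo}; no new geometric input beyond that construction is needed. Recall that, starting from a fixed compact hyperbolic $n$-manifold $M$ with disconnected geodesic boundary and a chosen component $B_0$ of $\bb M$, the construction produces a sequence $M_1,M_2,\dots$ of compact hyperbolic $n$-manifolds with non-empty geodesic boundary satisfying $\vol(M_i)=2i\,\vol(M)$ and $\vol(\bb M_i)=2\vol(B_0)$. The decisive feature is that the boundary volume stays constant while the total volume grows linearly, so that
$$
\lim_{i\to\infty} \frac{\vol(\bb M_i)}{\vol(M_i)} = 0.
$$

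Given $\eta>0$, I would first invoke Theorem~\ref{main:teo} to obtain a threshold $k>0$, depending only on $\eta$ and $n$, such that any hyperbolic $n$-manifold $N$ with non-empty geodesic boundary and $\vol(\bb N)/\vol(N)\leq k$ satisfies $\vol(N)/\|\overline{N}\|\geq v_n-\eta$. Since the ratios $\vol(\bb M_i)/\vol(M_i)$ tend to $0$, I can then fix an index $i$ large enough that $\vol(\bb M_i)/\vol(M_i)\leq k$ and take $M_i$ as the desired manifold. Theorem~\ref{main:teo} at once gives the lower bound $\vol(M_i)/\|\overline{M_i}\|\geq v_n-\eta$. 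For the strict upper bound I would simply appeal to Theorem~\ref{jung:teo}, which applies because $M_i$ has non-empty geodesic boundary and yields both $\|\overline{M_i}\|\neq 0$ (so the ratio is well defined) and $\vol(M_i)/\|\overline{M_i}\|<v_n$. Chaining the two inequalities produces exactly the conclusion $v_n>\vol(M_i)/\|\overline{M_i}\|\geq v_n-\eta$.

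The only step requiring genuine care is the geometric construction of the $M_i$ rather than this final assembly. I would need to verify that mirroring $M$ along $\bb M\setminus B_0$ yields a smooth complete hyperbolic manifold whose totally geodesic boundary consists of two isometric copies of $B_0$, and that isometrically gluing a geodesic boundary component of $M_i$ to one of $M_1$ again produces a hyperbolic manifold with geodesic boundary; both operations rest on the fact that reflection across, or isometric gluing along, a totally geodesic hypersurface extends the hyperbolic structure smoothly across the seam. The volume bookkeeping is then routine: each gluing contributes one further copy of the doubled block while cancelling two matched boundary components, so exactly two free copies of $B_0$ remain. Granting the cited existence of compact hyperbolic $n$-manifolds with disconnected geodesic boundary (see \cite{GroPia}), Corollary~\ref{cor:cor} follows.
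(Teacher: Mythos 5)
Your proposal is correct and follows essentially the same route as the paper: the authors likewise take the sequence $M_i$ built by mirroring and iterated gluing, observe that $\vol(\bb M_i)/\vol(M_i)\to 0$, apply Theorem~\ref{main:teo} to a sufficiently large $i$ for the lower bound, and obtain the strict inequality $<v_n$ from Theorem~\ref{jung:teo}. The extra care you flag about the mirroring and gluing operations is exactly what the paper dismisses as ``readily seen,'' so there is no divergence in substance.
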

 
Finally, it is maybe worth mentioning that our proof of Theorem~\ref{main:teo} can be applied
word by word (with obvious simplifications) to 
hyperbolic manifolds \emph{without} boundary, thus providing a somewhat new proof of 
Theorem~\ref{closed:teo} in the case of non-compact manifolds.

\subsection{Strategy of the proof}\label{strategy}
Let $M$ be a hyperbolic $n$-manifold with possibly empty geodesic boundary.
Once a \emph{straightening} procedure is defined which allows to compute the simplicial
volume of $M$ only considering linear combinations of geodesic simplices, 
the inequality $\vol (M) / \|\overline{M}\| \leq v_n$ is easily established. In 
the case without boundary, in 
order to prove
the converse inequality (which fails in the case with boundary), 
one has to show that for every $\vare>0$ a cycle 
$\alpha_\vare \in C_n (\overline{M},\bb\overline{M})$ exists which represents the fundamental class
of $\overline{M}$ and is such that $\|\alpha\|\leq {\vol (M)}/{v_n}+\vare$. 
Such a cycle
is said to be \emph{$\vare$-efficient}, and it is not difficult to
show that a cycle is $\vare$-efficient if its simplices
all have hyperbolic volume close to $v_n$ (see \emph{e.g.}~\cite{BePe}).

In the same spirit we prove here that if $M$ has non-empty
geodesic boundary then the gap between ${\vol (M)}/\|\overline{M}\|$ and $v_n$ is
bounded from above
by the amount of simplices in any fundamental cycle of $\overline{M}$
whose volume is forced
to be far from $v_n$.
In order to obtain Theorem~\ref{main:teo} we then
show how to control such amount
of simplices in terms of the ratio between the $(n-1)$-dimensional volume of $\partial M$ and the 
$n$-dimensional volume of $M$.

More precisely, 
in Section~\ref{hyp:sec} we briefly describe 
some results about the geometry of hyperbolic manifolds with geodesic boundary. 
Section~\ref{smearing:sec}, which uses tools from~\cite[Section 4]{Loh-Sauer}, 
is devoted to the description of a discrete version
of Thurston's smearing construction which is very useful for exhibiting efficient
cycles. Finally in Section~\ref{main:sec} we provide the needed estimates
on the norm of such cycles 
thus concluding the proof of Theorem~\ref{main:teo}.

\section{Hyperbolic manifolds with geodesic boundary}\label{hyp:sec}
Let $n\geq 3$ and let $M$ be a hyperbolic $n$-manifold
with \emph{non-empty} geodesic boundary. 
This section is devoted to a brief description of the geometry
of $M$.

\subsection{Natural compactification}\label{N1:sub}  
Since $\bb M$ is compact, $M$ decomposes as the union of a compact smooth manifold with boundary 
$N\subseteq M$ with $\bb M\subseteq N$
and a finite number of cusps of the form $T_i\times [0,\infty)$,
$i=1,\ldots,r$,
where $T_i$ is a closed Euclidean $(n-1)$-manifold for every $i$
(see \emph{e.g.}~\cite{Kojima1, Kojima2, Frigerio}, where also the non-compact boundary
case is considered). Moreover, $N$ can be chosen in such a way that each cusp $T_i\times [0,\infty)$
is isometric to the quotient of a closed horoball in $\matH^n$ by a 
parabolic group of isometries.

Up to choosing ``deeper'' cusps, given
$\vare>0$ we may suppose that the volume of $M\setminus N$ is at most $\vare$, and 
if this is the case we denote $N$ by the symbol 
$M_\vare$.
We also denote by $\bb M_\vare$ the boundary of $M_\vare$ as a topological manifold
and we set ${\rm int} (M_\vare)=M_\vare\setminus \bb M_\vare$.
Observe that $\bb M_\vare$ is given by the union of $\bb M$
and the boundaries of the deleted cusps.
 
The description of $M$ just given implies that there exists a well-defined piecewise smooth
nearest point retraction 
$M\to M_{\vare}$ which maps $M\setminus {\rm int} (M_\vare)$ onto
$\bb M_\vare$. Moreover,
$M$ admits
a natural compactification $\overline{M}$ which is obtained by adding
a closed Euclidean $(n-1)$-manifold for each cusp and is homeomorphic to
$M_\vare$.

\subsection{Universal covering}\label{N2:sub}
Let $\pi:\widetilde{M}\to M$ be
the universal covering of $M$. By developing $\widetilde{M}$
in $\matH^n$ we can identify $\widetilde{M}$ with
a convex polyhedron of $\matH^n$ bounded by
a countable number of disjoint geodesic hyperplanes.
The group of the automorphisms of the covering
$\pi:\widetilde{M}\to M$ can be identified in a natural way
with 
a discrete torsion-free subgroup $\Gamma$ of 
$\mathrm{Isom}^+ (\widetilde{M})<\mathrm{Isom}^+(\matH^n)$ such that
$M\cong\widetilde{M}/\Gamma$.
Also recall that there exists an isomorphism
$\pi_1(M)\cong\Gamma$, which is canonical up to
conjugacy. With a slight abuse, from now on we refer to 
$\Gamma$ as to the fundamental
group of $M$. 

The covering $\pi\colon \widetilde{M}\to M$ extends to a covering 
$\matH^n\to \matH^n/\Gamma=\widehat{M}$, which will still be denoted by $\pi$.
Being the quotient of $\matH^n$ by a discrete torsion-free group of isometries,
$\widehat{M}$ is a complete (infinite volume) hyperbolic manifold without boundary. 
The inclusion $\widetilde{M}\hookrightarrow \matH^n$ induces an isometric inclusion
$M\hookrightarrow \widehat{M}$ which realizes $M$ as the \emph{convex core}
of $\widehat{M}$ (see \emph{e.g.}~\cite{Frigerio}). Therefore, 
there exists a well-defined piecewise smooth nearest point retraction
of $\widehat{M}$ onto $M$, which maps $\widehat{M}\setminus {\rm int} (M)$ onto $\partial M$.

Let $\ext (M_\vare)=\widehat{M}\setminus {\rm int} (M_\vare)$.
If $\vare$ 
is sufficiently small, 
by composing the retractions $\widehat{M}\to M$, $M\to M_\vare$ 
mentioned above we get a retraction
$p\colon \widehat{M}\to M_\vare$ such that $p(\ext (M_\vare))\subseteq
\bb M_\vare$. Such a map is piecewise smooth and induces a homotopy equivalence of pairs 
$p\colon (\widehat{M},\ext (M_\vare))\to (M_\vare, \bb M_\vare)$.

Since $M$ retracts to $M_\vare$ via a homotopy equivalence, the set
$\widetilde{M}_\vare=\pi^{-1} (M_\vare)\subseteq \widetilde{M}$ is simply connected, and provides
therefore the Riemannian universal covering of $M_\vare$. 
If $\ext (\widetilde{M}_\vare)=\matH^n \setminus \widetilde{M}_\vare$, then by construction 
$\ext (\widetilde{M}_\vare)$ is a $\Gamma$-invariant
disjoint union of closed half-spaces and closed horoballs. In particular,
every component of $\ext (\widetilde{M}_\vare)$ is  convex.

\section{Straightening and smearing}\label{smearing:sec}
Introduced by Thurston in~\cite{Thurston}, the smearing construction plays a fundamental r\^ole in
several proofs of Theorem~\ref{closed:teo} (see \emph{e.g.}~\cite{Thurston, Kuessner}).
Such a construction takes usually place in the setting of the so-called
\emph{measure homology}~\cite{Thurston, Zastrow, Loh}.
In order to make the proof of Theorem~\ref{main:teo} as self-contained as possible
and to get rid of some technicalities, we follow here some ideas described in~\cite{Loh-Sauer},
where a ``discrete version'' of the smearing construction is introduced.

\subsection{Straight simplices}\label{straight1:sub}
We now fix some notations we will be extensively using from now on.
For $i\in\matN$ we denote by $e_i$ the point $(0,0,\ldots,1,\ldots,0,0,\ldots)\in \R^{\matN}$
where the unique non-zero coefficient is at the $i$-th entry (entries are indexed by $\matN$,
so $(1,0,\ldots)=e_0$). 
We denote by $\Delta_p$ the standard $p$-simplex, 
\emph{i.e.}~the convex hull of $e_0,\ldots,e_p$, and we observe
that with these notations we have $\Delta_p\subseteq \Delta_{p+1}$.
If $\sigma\in S_p (X)$ is a singular simplex, we let $\bb_i \sigma \in S_{p-1} (X)$ be
the $i$-th face of $\sigma$. 

Let $k\in\matN$, and let $x_0,\ldots,x_k$ be points in $\matH^n$. We recall here
the well-known definition of \emph{straight} simplex $[x_0,\ldots, x_k]\in S_k (\matH^n)$
with vertices $x_0,\ldots,x_k$:
if $k=0$, then $[x_0]$ is the $0$-simplex with image $x_0$; if straight simplices have
been defined for every $h\leq k$, then $[x_0,\ldots,x_{k+1}]\colon \Delta_{k+1}\to\matH^n$
is determined by the following condition:
for every $z\in \Delta_k\subseteq \Delta_{k+1}$, the restriction
of $[x_0,\ldots,x_{k+1}]$ to the segment with endpoints
$z,e_{k+1}$ is a constant speed parameterization of
the geodesic joining $[x_0,\ldots,x_k] (z)$ to $x_{k+1}$
(the fact that $[x_0,\ldots,x_{k+1}]$ is well-defined and continuous is an obvious
consequence of the fact that 
any two given points 
in $\matH^n$ are joined by a unique geodesic, and hyperbolic geodesics
continuously depend on their endpoints). It is not difficult to show that
the image of a straight $k$-simplex coincides with the hyperbolic convex hull
of its vertices, and is therefore a (possibly degenerate) geodesic
$k$-simplex. 

Keeping notations from Subsections~\ref{N1:sub}, \ref{N2:sub},
if $\mhat$ is a hyperbolic $n$-manifold
with universal covering $\pi\colon \matH^n\to\mhat$ then 
we say that $\sigma\colon \Delta_k \to \mhat$ is \emph{straight} if it is obtained
by composing a straight simplex in $\matH^n$ with the covering projection
$\pi$.

The results stated in the following remarks will not be used
in this paper.

\begin{rem}
Let $\sigma\in S_k (\mhat)$, take  a lift $\widetilde{\sigma}\in S_k (\matH^n)$
and define $\str_k (\sigma)=\pi\circ [\widetilde{\sigma}(e_0),\ldots,\widetilde{\sigma}(e_k)]$.
Since isometries preserve geodesics we
have $\gamma\circ [x_0,\ldots,x_k]=[\gamma (x_0),\ldots, \gamma (x_k)]$
for every 
$\gamma \in \Gamma$, so $\str_k (\sigma)$ does not depend on the choice
of $\widetilde{\sigma}$.
It is well-known that $\str_\ast$ linearly extends to 
a chain endomorphism of $C_\ast (\mhat)$ which is algebraically homotopic to the identity.
\end{rem}

\begin{rem}
Straight simplices may be defined in the much more general setting of non-positively curved
complete Riemannian manifolds. Almost all the properties of straight simplices
(and of the associated straightening procedure)
described above also hold in this wider context. It is maybe worth mentioning, however, that
in the simply connected non-constant curvature case, straight simplices need not be convex.
\end{rem}

\subsection{Haar measure}\label{haar:sub}
 Let $G$ be a locally compact Hausdorff topological group and $\mathcal{A}$ be the $\sigma$-algebra 
of the Borelian subsets of $G$. A measure $\mu$ on 
$\mathcal{A}$ is called \emph{regular} if for each $A\in \mathcal{A}$
$$
\begin{array}{rcl}
\mu(A)&=&\sup \{\mu(K)\;|\;K\; \mbox{compact set},\;K\subset A\}\\
\mu(A)&=&\inf \{\mu(U)\;|\;U\; \mbox{open set},\; A\subset U\}.
\end{array}
$$

\begin{defn}
 A \emph{Haar measure} $\mu_G$ on $G$ is a non-negative regular measure $\mu_G$ on the $\sigma$-algebra
$\mathcal{A}$ such that:
\begin{itemize}
\item $\mu_G (K)<\infty$ for each compact set $K\in\mathcal{A}$;
\item $\mu_G (A)\neq 0$ for each non-empty open set $A\in\mathcal{A}$;
\item $\mu_G (gA)=\mu_G (A)$ $\forall g\in G$ and $\forall A \in \mathcal{A}$,
namely the measure is left invariant.
\end{itemize}
\end{defn}

It is well-known that every locally compact Hausdorff group admits a Haar measure, which
is unique  up to multiplication by a positive costant \cite{Johnson, Weil}.
The group $G$ is called \emph{unimodular} if each left invariant Haar measure on it is also right invariant.
 
From now on we denote by $G$  
the group ${\rm Isom}^+ (\matH^n)$
of orientation-preserving isometries of $\matH^n$, endowed with the compact-open 
topology.

\begin{prop}[\cite{BePe, Ratcliffe}]\label{Haarprop}
The group $G$ is locally compact and unimodular.
Moreover, the Haar measure $\mu_G$ on $G$ can be normalized in such a way that the following condition
holds:
for every basepoint $x\in \matH^n$ and every  Borelian set $R\subseteq \matH^n$, if
$$
S=\{g\in G\, |\, g(x)\in R\}
$$
then $S$ is Borelian  and $\mu_G (S)$ is equal to the hyperbolic volume of $R$.
\end{prop}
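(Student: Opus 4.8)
The plan is to realize $\matH^n$ as a homogeneous space of $G$ and to transport the Haar measure onto the hyperbolic volume via the orbit map. First I would record the basic structure: $G=\Isom^+(\matH^n)$ acts transitively on $\matH^n$, and for a fixed basepoint $x$ the stabilizer $K=\{g\in G\,:\,g(x)=x\}$ is a compact subgroup (isomorphic to $\mathrm{SO}(n)$), so the orbit map $p\colon G\to\matH^n$, $p(g)=g(x)$, induces a homeomorphism $G/K\cong\matH^n$. Local compactness of $G$ is then immediate: one may view $G$ as the isometry group of the proper metric space $\matH^n$ with the compact--open topology, or identify $G$ with the identity component of the matrix group $\mathrm{SO}(n,1)$, a closed subset of a Euclidean space acting on the hyperboloid model. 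In particular $G$ is a connected Lie group and admits a Haar measure $\mu_G$, unique up to a positive factor.

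Next I would prove that $G$ is unimodular, i.e.\ that the modular homomorphism $\Delta\colon G\to\R_{>0}$, defined by $\mu_G(Bh)=\Delta(h)\,\mu_G(B)$ for Borel $B$, is trivial. Since $\Delta$ is a continuous homomorphism into the abelian group $\R_{>0}$ and $K$ is compact, we have $\Delta|_K\equiv 1$. Using the Cartan ($KAK$) decomposition, it then suffices to check that $\Delta$ vanishes on the one-parameter subgroup $A$ of transvections along a fixed geodesic $\ell$; but for $n\geq 2$ every such transvection $a$ is conjugate in $G$ to its inverse (for instance by a rotation of angle $\pi$ in a geodesic plane containing $\ell$, which is orientation-preserving and reverses $\ell$), whence $\Delta(a)=\Delta(a^{-1})=\Delta(a)^{-1}$ and therefore $\Delta(a)=1$. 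Thus $\Delta\equiv 1$. Alternatively one may simply invoke that $G$ is a connected semisimple---indeed simple---Lie group, and that such groups are unimodular.

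With these facts in hand I would obtain the volume statement by a pushforward argument. Fix $x$ and, for a Borel set $R\subseteq\matH^n$, set $S=p^{-1}(R)=\{g\in G\,:\,g(x)\in R\}$ and $\nu(R)=\mu_G(S)$, so that $\nu=p_\ast\mu_G$. For $h\in G$ one checks $p^{-1}(hR)=h\cdot p^{-1}(R)$, so left invariance of $\mu_G$ gives $\nu(hR)=\nu(R)$: the measure $\nu$ is $G$-invariant. Moreover $\nu$ is locally finite, since $p$ factors through the proper projection $G\to G/K\cong\matH^n$ (here compactness of $K$ is used), so the preimage of a compact set is compact and hence of finite $\mu_G$-measure; and $\nu$ is positive on non-empty open sets, since their $p$-preimages are open and non-empty. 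By the uniqueness up to a positive scalar of the $G$-invariant regular Borel measure on the homogeneous space $\matH^n$, there is a constant $c>0$ with $\nu=c\cdot\vol$. Rescaling $\mu_G$ by $1/c$---which leaves it a Haar measure---yields exactly $\mu_G(S)=\vol(R)$. Finally, to see that the identity holds for \emph{every} basepoint, I would note that if $x'=h(x)$ then the corresponding set $S'$ equals the right translate $S\cdot h^{-1}$, and unimodularity gives $\mu_G(S')=\mu_G(S)=\vol(R)$; this is the one point where right invariance is genuinely needed.

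The routine parts are local compactness and the $G$-invariance of $\nu$. The main obstacle is the measure-theoretic heart of the argument: verifying that the pushforward $\nu$ is a bona fide locally finite regular Borel measure and invoking the uniqueness of the invariant measure on $\matH^n$ to identify it with a multiple of hyperbolic volume. This is where compactness of $K$ (yielding properness of $p$ and local finiteness) and unimodularity (making the normalization independent of the basepoint) do the real work; the uniqueness statement itself can be derived from Weil's quotient integration formula for $G\to G/K$, using that both $G$ and $K$ are unimodular.
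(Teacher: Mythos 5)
Your argument is correct and complete: the identification $\matH^n\cong G/K$ with $K$ compact, the pushforward of $\mu_G$ identified with a multiple of the hyperbolic volume via uniqueness of the $G$-invariant Radon measure, and the use of unimodularity (right invariance) to make the normalization independent of the basepoint are exactly the points that need to be checked, and you check them. The paper itself gives no proof of this proposition --- it is quoted from \cite{BePe, Ratcliffe} --- and your write-up is essentially the standard argument found in those references, so there is nothing to compare beyond noting that your treatment of the basepoint-independence step (where unimodularity is genuinely used) is the part most often left implicit.
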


From now on we fix a Haar measure $\mu_G$ on $G$ satisfying the
normalization condition described in Proposition~\ref{Haarprop}. 
Keeping notations from the preceding section, 
$\Gamma$ acts properly discontinuously on $G$ via left translations
as a group of measure-preserving diffeomorphisms, so if $W$ is a Borelian subset of $\Gamma\backslash G$
we can set 
$$
\mu_{\Gamma\backslash G} (W)=\mu_G (\widetilde{W}),
$$
where $\widetilde{W}\subseteq G$ 
is any Borelian set that projects bijectively onto $W$.
It is readily seen that this definition of $\mu_{\Gamma\backslash G}(W)$ 
does not depend on the choice of $\widetilde{W}$, and that $\mu_{\Gamma\backslash G}$ is in fact a regular
\emph{right-invariant} measure on $\Gamma\backslash G$.
The following lemma will prove useful later:

\begin{lemma}\label{meas:lemma}
Let $A\subseteq \mhat$ be a Borelian subset, fix a basepoint $x\in\matH^n$ and let
$$
T=\{g\in G\, |\, \pi(g(x))\in A\}.
$$
Then $T$ is a Borelian subset of $G$ such that $\gamma\cdot T=T$ for every $\gamma\in\Gamma$ and
$\mu_{\Gamma \backslash G} (\Gamma \backslash T)=\vol (A)$, 
where $\vol$ denotes the measure induced by the hyperbolic
volume form of $\mhat$. 
\end{lemma}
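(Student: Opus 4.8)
The plan is to reduce the statement to the normalization property of $\mu_G$ recorded in Proposition~\ref{Haarprop} by passing to a fundamental domain for the action of $\Gamma$ on $\matH^n$. Throughout I would set $\widetilde A=\pi^{-1}(A)\subseteq\matH^n$ and observe that the condition $\pi(g(x))\in A$ is equivalent to $g(x)\in\widetilde A$, so that
$$
T=\{g\in G\,|\, g(x)\in\widetilde A\}.
$$
The measurability asserted first is then immediate: the evaluation map $\mathrm{ev}_x\colon G\to\matH^n$, $g\mapsto g(x)$, is continuous for the compact-open topology, the set $\widetilde A$ is Borelian because $A$ is Borelian and $\pi$ is continuous, and $T=\mathrm{ev}_x^{-1}(\widetilde A)$. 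For the $\Gamma$-invariance I would simply note that $(\gamma g)(x)=\gamma(g(x))$ while $\widetilde A$ is $\Gamma$-invariant, so $g\in T$ if and only if $\gamma g\in T$ for every $\gamma\in\Gamma$.

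The core of the argument is the measure computation, and the idea is to exhibit an explicit Borelian subset of $T$ meeting every $\Gamma$-orbit exactly once, so that it may serve as the set $\widetilde W$ in the definition of $\mu_{\Gamma\backslash G}$. First I would fix a measurable fundamental domain $D\subseteq\matH^n$ for the action of $\Gamma$ (for instance a Dirichlet domain), so that $\pi|_D\colon D\to\mhat$ is a bijection onto $\mhat$ up to a subset of measure zero, and recall that $\pi$, being a Riemannian covering, is a local isometry and hence volume preserving. Then I would set
$$
T_0=\{g\in T\,|\, g(x)\in D\}=\{g\in G\,|\, g(x)\in\widetilde A\cap D\},
$$
which is Borelian as an intersection of $T$ with $\mathrm{ev}_x^{-1}(D)$.

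The crucial claim is that $T_0$ is a system of representatives for $\Gamma\backslash T$. This rests on the fact that $\Gamma$ is torsion-free and therefore acts \emph{freely} on $\matH^n$: for fixed $g$, the map $\gamma\mapsto(\gamma g)(x)=\gamma(g(x))$ is injective, so it identifies the orbit $\Gamma g$ with the orbit $\Gamma\cdot g(x)$ via $\mathrm{ev}_x$, and $D$ selects exactly one point of $\Gamma\cdot g(x)$, thereby singling out exactly one element of $\Gamma g$ lying in $T_0$. Granting this, the conclusion follows by two successive identifications. By the definition of the quotient measure one has $\mu_{\Gamma\backslash G}(\Gamma\backslash T)=\mu_G(T_0)$, and applying Proposition~\ref{Haarprop} with $R=\widetilde A\cap D$ gives $\mu_G(T_0)=\vol(\widetilde A\cap D)$. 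Finally, since $\pi$ is a local isometry and $\pi|_D$ carries $\widetilde A\cap D$ bijectively, up to null sets, onto $A$, volume is preserved and $\vol(\widetilde A\cap D)=\vol(A)$, which yields the claim.

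I expect the only genuinely delicate point to be the bijectivity of the projection $T_0\to\Gamma\backslash T$: one must control the measure-zero boundary of $D$, where the fundamental-domain property fails, and verify that the freeness of the $\Gamma$-action really upgrades the single-representative property on $\matH^n$ to the level of isometries in $G$. Fortunately these discrepancies are supported on null sets and hence do not affect the final value of $\mu_{\Gamma\backslash G}(\Gamma\backslash T)$.
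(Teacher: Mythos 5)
Your proof is correct and follows essentially the same route as the paper: pull back along the continuous evaluation map for measurability and invariance, cut $T$ down by a Borel fundamental domain $D$ for $\Gamma$ on $\matH^n$ to get a set of representatives for $\Gamma\backslash T$, and apply the normalization of Proposition~\ref{Haarprop} together with the fact that $\pi|_D$ is measure-preserving. The only difference is cosmetic: the paper takes $D$ to be an \emph{exact} Borelian set of orbit representatives, which dispenses with the null-set caveat you raise at the end.
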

\begin{proof}
Since the map $f_x\colon G\to \mhat$ defined by $f_x (g)=\pi (g(x))$ is continuous,
if $A$ is Borelian then $T=f_x^{-1} (A)$ is Borelian and $\Gamma$-invariant. Moreover, 
if $D\subseteq \matH^n$ is a Borelian set of representatives for the action of $\Gamma$
on $\matH^n$, then the set $T'=\{g\in G\, |\, g(x)\in D\cap \pi^{-1} (A)\}$ is a Borelian
set of representatives for the left action of $\Gamma$ on $T$. We have therefore 
$\mu_{\Gamma \backslash G} (\Gamma\backslash T)=\mu_G (T')$. On the other hand the restriction
$\pi|_D\colon D\to\mhat$ is measure-preserving, so $\vol (A)=\vol (D\cap \pi^{-1} (A))=
\mu_G (T')$, where the last equality is due to the chosen normalization of 
$\mu_G$.  
\end{proof}

\subsection{$\Gamma$-nets}
In order to define a ``discrete'' smearing procedure in the spirit of~\cite{Loh-Sauer}
we now need to introduce the notion of $\Gamma$-net.
A $\Gamma$-\emph{net} in $\matH^n$ is given by a discrete subset $\lamtil\subseteq \matH^n$ 
(called set of \emph{vertices}) and a collection
of Borelian sets $\{\widetilde{B}_x\}_{x\in \lamtil}$ (called \emph{cells})
such that the following conditions hold:

\begin{enumerate}
\item
$x\in \widetilde{B}_x$ for every $x\in \lamtil$, 
$\matH^n=\bigcup_{x\in\lamtil} \widetilde{B}_x$ 
and $\widetilde{B}_x\cap \widetilde{B}_y=\emptyset$ for every $x,y\in \lamtil$
with $x\neq y$;
\item
$\gamma (\lamtil)=\lamtil$ for every $\gamma\in \Gamma$
and 
$\gamma (\widetilde{B}_x)=\widetilde{B}_{\gamma(x)}$ for every $x\in\lamtil$, $\gamma\in \Gamma$;
\item
${\rm diam}\, (\widetilde{B}_x) \leq 1$ for every $x\in\lamtil$;
\item
if $\widetilde{K}$ is a connected component of $\ext (\widetilde{M}_\vare)$, then
$\widetilde{K}\subseteq \bigcup_{x\in\lamtil\cap \widetilde{K}} \widetilde{B}_x$.
\end{enumerate}

We begin with the following:

\begin{lemma}\label{net:lemma}
There exists a $\Gamma$-net.
\end{lemma}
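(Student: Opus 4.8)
The plan is to produce the net directly from a Borel fundamental domain for the action of $\Gamma$ on $\matH^n$, exploiting that $\Gamma$ permutes the pieces of the natural decomposition of $\matH^n$ described in Subsection~\ref{N2:sub}. First I would record the decomposition that the cells must respect. Since the connected components of $\ext(\widetilde{M}_\vare)$ are pairwise disjoint \emph{closed} convex sets (half-spaces and horoballs) whose bounding hyperplanes and horospheres are themselves pairwise disjoint in $\matH^n$, I let $\{R_j\}_j$ be these (closed) components and set $R_\infty=\matH^n\setminus\bigsqcup_j R_j$. This yields a genuine Borel partition $\mathcal{R}=\{R_\infty\}\cup\{R_j\}_j$ of $\matH^n$ into countably many pieces which is \emph{$\Gamma$-invariant} as a family, in the sense that $\Gamma$ permutes the $R_j$ and fixes $R_\infty$. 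The point of assigning every boundary hypersurface to its component is that condition~(4) will follow automatically once each cell is contained in a single member of $\mathcal{R}$.

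Next I would fix a Borelian set of representatives $D\subseteq\matH^n$ for the action of $\Gamma$, exactly as in the proof of Lemma~\ref{meas:lemma}, so that $\matH^n=\bigsqcup_{\gamma\in\Gamma}\gamma D$. Choosing a countable, locally finite cover of $\matH^n$ by open balls of radius $1/2$ and intersecting its members with $D$ and with the members of $\mathcal{R}$, I obtain a countable Borel cover of $D$ by sets of diameter at most $1$, each contained in a single member of $\mathcal{R}$. A standard disjointification then produces a countable Borel partition $D=\bigsqcup_\alpha D_\alpha$ with ${\rm diam}(D_\alpha)\le 1$ and each $D_\alpha$ contained in one member of $\mathcal{R}$. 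For every $\alpha$ I pick a point $x_\alpha\in D_\alpha$ and define vertices and cells by
$$
\lamtil=\{\gamma x_\alpha\,:\,\gamma\in\Gamma,\ \alpha\},\qquad \widetilde{B}_{\gamma x_\alpha}=\gamma D_\alpha .
$$
Because $D$ meets each orbit exactly once, the points $x_\alpha$ lie in pairwise distinct $\Gamma$-orbits, so $\gamma x_\alpha=\gamma' x_{\alpha'}$ forces $\gamma=\gamma'$ and $\alpha=\alpha'$; hence each vertex determines its cell unambiguously, and $x=\gamma x_\alpha\in\gamma D_\alpha=\widetilde{B}_x$.

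It then remains to verify the four defining properties. Properties~(1) and~(2) are immediate from $\matH^n=\bigsqcup_\gamma\gamma D$, the disjointness of the $D_\alpha$, and the identity $\gamma'(\gamma D_\alpha)=(\gamma'\gamma)D_\alpha$; property~(3) holds because $\Gamma$ acts by isometries, so ${\rm diam}(\gamma D_\alpha)={\rm diam}(D_\alpha)\le 1$. For property~(4) the crucial observation is that each cell $\gamma D_\alpha$ is contained in the single member $\gamma R\in\mathcal{R}$ obtained by applying $\gamma$ to the member $R$ containing $D_\alpha$: if a point $p$ of a component $\widetilde{K}$ lies in a cell $\widetilde{B}_x$, then $\widetilde{B}_x$ is contained in a member of $\mathcal{R}$ meeting $\widetilde{K}$, which by disjointness of the components cannot be $R_\infty$ and must therefore equal $\widetilde{K}$; hence $x\in\widetilde{K}$, giving $\widetilde{K}\subseteq\bigcup_{x\in\lamtil\cap\widetilde{K}}\widetilde{B}_x$. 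Finally, discreteness of $\lamtil$ follows from proper discontinuity of the action, since any compact subset of $\matH^n$ meets only finitely many translates $\gamma D$. The step I expect to be the main (though purely technical) obstacle is precisely this bookkeeping: one must run the refinement so that the family $\{D_\alpha\}$ stays locally finite — which is what forces the joint use of a \emph{locally finite} ball cover and of the local finiteness of the family $\{R_j\}$ in $\matH^n$ — while still respecting $\mathcal{R}$ with boundaries assigned to the closed components. Everything else is routine.
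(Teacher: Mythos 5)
Your construction is genuinely different from the paper's. The paper builds the cells \emph{downstairs}, from a smooth triangulation of $\mhat$ that restricts to a triangulation of $M_\vare$ and has simplices of diameter at most $1/2$, ordering the simplices so that those whose interior lies in $M_\vare$ come first (this ordering is what yields condition~(4)); the cells $T_i\setminus\bigcup_{j>i}T_j$ are simply connected, so they lift to $\matH^n$ componentwise, and equivariance comes for free from the lifting. You instead work entirely upstairs, partitioning a Borel fundamental domain by a locally finite ball cover refined by the $\Gamma$-invariant partition $\mathcal{R}$ into the components of $\ext(\widetilde{M}_\vare)$ and their complement, and then translating by $\Gamma$. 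Your verifications of (1)--(3) and your treatment of (4) via $\mathcal{R}$ are correct, and the latter is arguably cleaner than the paper's ordering trick; your route also avoids invoking the existence of adapted smooth triangulations.

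There is, however, one step that fails as written: the discreteness of $\lamtil$. You justify it by asserting that a compact set meets only finitely many translates $\gamma D$, but this is false for an arbitrary Borelian set of representatives such as the one used in Lemma~\ref{meas:lemma}. Proper discontinuity controls $\{\gamma\in\Gamma\,:\,\gamma C\cap C\neq\emptyset\}$ for $C$ compact, not $\{\gamma\in\Gamma\,:\,\gamma D\cap C\neq\emptyset\}$ for a Borel set $D$ that may distribute its points arbitrarily among the orbits: reassigning the representatives of a sequence of orbits to suitable far-away translates produces a Borel set of representatives infinitely many of whose $\Gamma$-translates accumulate at a point, and correspondingly $\Gamma\cdot\{x_\alpha\}$ can acquire an accumulation point even though $\{x_\alpha\}$ itself is locally finite. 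Since discreteness of $\lamtil$ is used essentially in the proof of Lemma~\ref{b:lemma}, you must build it in: take $D$ to be a Borelian set of representatives contained in the closure of a Dirichlet domain for $\Gamma$ (the action is free, $\Gamma$ being discrete and torsion-free), whose translates form a locally finite family. With that choice a compact set $C$ meets only finitely many $\gamma D$, each $\gamma^{-1}C$ meets only finitely many $D_\alpha$ by the local finiteness you already arranged for $\{D_\alpha\}$, and $\lamtil$ is closed and discrete. Everything else in your argument then goes through.
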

\begin{proof}
Let $\{T_i\}_{i\in\matN}$ be a smooth triangulation of $\mhat$ 
which restricts to a smooth triangulation of $M_\vare$ and is such that 
${\rm diam} (T_i)\leq 1/2$ for every $i\in\matN$. 
Since $M_\vare$ is compact, the set of simplices of the triangulation whose internal part
is contained in $M_\vare$ is finite, so we may assume that indices are ordered in such a way
that $i<j$ for every pair of simplices such that
${\rm int} (T_i)\subseteq M_\vare$ and ${\rm int} (T_j)\nsubseteq M_\vare$.

If $x_i$ is any point in ${\rm int} (T_i)$ we now set $\lam=\{x_i\}_{i\in\matN}$
and $B_{x_i}=T_i\setminus (\bigcup_{j>i} T_j)$. By contruction $B_{x_i}$
is a simply connected Borelian subset of $\mhat$ for every $i\in\matN$, and 
if $K$ is a connected component of ${\rm ext}({M}_\vare)$, then
$K = \bigcup_{x\in\lam\cap K} B_x$.

Let now $\lamtil=\pi^{-1} (\lam)$. For $i\in\matN$, since $B_{x_i}$ is simply connected
every point $\widetilde{x}\in\pi^{-1} (x_i)$ is contained in exactly one
connected component $\widetilde{B}_{\widetilde{x}}$
of $\pi^{-1} (B_{x_i})\subseteq \matH^n$.

It is now readily seen that the pair $\left(\lamtil, \{\widetilde{B}_x\}_{x\in\lamtil}\right)$
is a $\Gamma$-net.
\end{proof}

\subsection{Smearing}\label{smearing:sub}
Let now $L>0$ be fixed, and let $q^+_0,\ldots q^+_n\in\matH^n$ be the vertices of a fixed regular
simplex with edgelength $L$ such that the embedding $\tau^+_L=[q^+_0,\ldots,q^+_n]$
of the standard simplex in $\matH^n$ is orientation-preserving. 
We also fix an orientation-reversing isometry $g_-$ of $\matH^n$
and set $q_i^- = g_- (q_i)$, 
$\tau_L^- =[q_0^-,\ldots, q_n^-]=g_-\circ \tau^+_L$.

 Let us fix a $\Gamma$-net $\left(\lamtil,\{\widetilde{B}_x\}_{x\in\lamtil}\right)$ and let
$S_n^{\lam} (\mhat)$ be the set of straight simplices in $\mhat$ with vertices in
$\lam=\pi (\lamtil)$.
We would like to define
real coefficients $a_\sigma$ in such a way that the sum
$$
\sum_{\sigma\in S_n^{\lam} (\widehat{M})} a_\sigma \sigma
$$
is finite and defines a cycle in $C_n (\widehat{M},{\rm ext} (M_\vare))$.
Roughly speaking, the coefficient $a_\sigma$ will measure the difference between the accuracy with which 
$\sigma$ approximates an isometric copy of $\tau^+_L$ and 
the accuracy with which 
it approximates an isometric copy of $\tau^-_L$.

So let us fix $\sigma\in S_n^\Lambda (\widehat{M})$, let $\widetilde{\sigma}$ be a lift of
$\sigma$ to $\matH^n$ and let $\widetilde{x}_{i_0},\ldots, \widetilde{x}_{i_n}$ be the vertices of
$\widetilde{\sigma}$. Since $\sigma\in S_n^\lam$ we have $\widetilde{x}_{i_j}\in \lamtil$ for every $j$,
and we denote by $\widetilde{B}_{i_j}$ the cell of the net containing $\widetilde{x}_{i_j}$.
Since different lifts of $\sigma$ differ by the action of an element of $\Gamma$ 
it is easily seen that the sets
$$
\begin{array}{lll}
\Omega_{\sigma}^+ &=&\{[g]\in \Gamma \backslash G\, |\, g (q^+_j) \in \widetilde{B}_{i_j}\ 
{\rm for\ every}\ j \},\\
\Omega_{\sigma}^- &=&\{[g]\in \Gamma \backslash G\, |\, g (q^-_j) \in \widetilde{B}_{i_j}\ 
{\rm for\ every}\ j \}
\end{array}
$$
are well-defined (\emph{i.e.}~do not depend on the chosen lift $\widetilde{\sigma}$)
and Borelian.

We now divide the simplices of $S_n^\lam (\mhat)$ into different classes.
We denote by $W^+$ (resp.~$W^-$)
the set of simplices which intersect ${\rm int}(M_\vare)$ and are ``almost isometric''
to $\tau_L^+$ (resp.~$\tau_L^-$): 
$$
\begin{array}{lll}
W^+ &=& \{\sigma\in S_n^\Lambda (\mhat)\, |\, {\rm Im}\, 
(\sigma) \cap {\rm int}(M_\vare) \neq\emptyset\ {\rm and}\ \Omega_\sigma^+ \neq \emptyset\},\\
W^- &=& \{\sigma\in S_n^\Lambda (\mhat)\, |\, {\rm Im}\, 
(\sigma) \cap {\rm int}(M_\vare) \neq\emptyset\ {\rm and}\ \Omega_\sigma^- \neq \emptyset\}.
\end{array}
$$
Moreover, we denote by $W_{\rm int}^\pm$ the subset of $W^\pm$ given by those simplices
whose image is entirely contained in ${\rm int} (M_\vare)$:
$$
W_{\rm int}^+ = \{\sigma\in W^+\, | \, {\rm Im}\, 
(\sigma)\subseteq {\rm int}(M_\vare)\},\quad 
W_{\rm int}^-  =  \{\sigma\in W^-\, | \, {\rm Im}\, 
(\sigma)\subseteq {\rm int}(M_\vare)\},
$$
and we finally set
$$
W  =  W^+ \cup W^-,\quad  W_{\rm int}=W^+_{\rm int} \cup W^-_{\rm int},\quad W_{\ext} =W\setminus W_{\rm int}.
$$

For every $\sigma \in W$ we now set 
$$
b^+_\sigma= \mu_{\Gamma \backslash G} (\Omega_{\sigma}^+),\quad 
b^-_\sigma= \mu_{\Gamma \backslash G} (\Omega_{{\sigma}}^-),\quad a_\sigma= \frac{b_\sigma^+-b_\sigma^-}{2}.
$$ 

We will prove soon that $W$ is finite and that each $a_\sigma$ is a well-defined real number
(see Lemma~\ref{b:lemma}).
Moreover, we will show in Proposition~\ref{ciclo:prop}
that the sum $\sum_{\sigma\in W} a_\sigma \sigma$ defines a relative cycle 
in $C_n (\mhat, \ext (M_\vare))$, which defines in turn 
a cycle in $C_n (M_\vare,\bb M_\vare)$ via a projection which does not affect simplices
supported in ${\rm int} (M_\vare)$. Therefore if $L$ is large and ``most'' simplices
of $W$ are contained in ${\rm int} (M_\vare)$, then a fundamental cycle for $M_\vare$
exists most simplices of which have volume close to $v_n$. As explained in 
Subsection~\ref{strategy}, this is sufficient for proving that
the simplicial
volume of $M_\vare$ (whence of $\overline{M}$) is close to $\vol (M_\vare)/v_n$ 
(whence to $\vol (M)/v_n$).

\begin{lemma}\label{tetra:lemma}
If $\widetilde{\sigma}\in S_n (\matH^n)$ is a lift of a simplex $\sigma\in W$, 
then ${\rm diam} ({\rm Im}(\widetilde{\sigma}))\leq L+2$.
\end{lemma}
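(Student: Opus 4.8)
The plan is to reduce the diameter of the whole straight simplex to the diameter of its vertex set, and then to control the latter by comparing the vertices of $\widetilde{\sigma}$ with the image, under a suitable isometry, of the fixed regular reference simplex $\tau_L^\pm$. Since $\sigma\in W=W^+\cup W^-$, and the two cases are completely symmetric (just replace $+$ by $-$ and $q_j^+$ by $q_j^-$ throughout), I may assume $\Omega_\sigma^+\neq\emptyset$. Fixing the lift $\widetilde{\sigma}$ with vertices $\widetilde{x}_{i_0},\ldots,\widetilde{x}_{i_n}\in\lamtil$, and denoting by $\widetilde{B}_{i_j}$ the cell of the net containing $\widetilde{x}_{i_j}$, the nonemptiness of $\Omega_\sigma^+$ furnishes an isometry $g\in G$ (a representative of a class in $\Gamma\backslash G$, compatible with the chosen lift) such that $g(q_j^+)\in\widetilde{B}_{i_j}$ for every $j$.

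First I would estimate the pairwise distances of the vertices of $\widetilde{\sigma}$. Since both $\widetilde{x}_{i_j}$ and $g(q_j^+)$ lie in the same cell $\widetilde{B}_{i_j}$, and ${\rm diam}(\widetilde{B}_{i_j})\leq 1$ by property $(3)$ in the definition of a $\Gamma$-net, we get $d(\widetilde{x}_{i_j},g(q_j^+))\leq 1$ for every $j$. Because $g$ is an isometry and $q_0^+,\ldots,q_n^+$ are the vertices of a regular simplex of edgelength $L$, we have $d(g(q_j^+),g(q_k^+))=L$ for $j\neq k$. The triangle inequality then yields $d(\widetilde{x}_{i_j},\widetilde{x}_{i_k})\leq 1+L+1=L+2$ for all $j,k$, so the vertex set of $\widetilde{\sigma}$ has diameter at most $L+2$.

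It then remains to pass from the vertices to the whole image of $\widetilde{\sigma}$. Recall from Subsection~\ref{straight1:sub} that ${\rm Im}(\widetilde{\sigma})$ is exactly the hyperbolic convex hull of $\widetilde{x}_{i_0},\ldots,\widetilde{x}_{i_n}$. The point I would use is that in $\matH^n$ the distance from any fixed point is a convex function along geodesics (equivalently, closed balls are convex), so the maximum of such a function over the convex hull of finitely many points is attained at one of those points; this follows by a straightforward induction on the coning construction that defines the straight simplex. Applying this twice — first to $z\mapsto d(z,q)$ to see that $d(p,q)\leq\max_j d(\widetilde{x}_{i_j},q)$, and then to $z\mapsto d(z,\widetilde{x}_{i_j})$ to see that $d(\widetilde{x}_{i_j},q)\leq\max_k d(\widetilde{x}_{i_j},\widetilde{x}_{i_k})$ — shows that any two points $p,q\in{\rm Im}(\widetilde{\sigma})$ satisfy $d(p,q)\leq\max_{j,k}d(\widetilde{x}_{i_j},\widetilde{x}_{i_k})\leq L+2$, whence ${\rm diam}({\rm Im}(\widetilde{\sigma}))\leq L+2$. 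The only genuinely non-trivial ingredient is this last step, that the diameter of a geodesic simplex equals the diameter of its vertex set; everything else is the triangle inequality together with the defining bound ${\rm diam}(\widetilde{B}_x)\leq 1$. Since the convexity fact is standard for Hadamard manifolds, I expect no real obstacle, and the bound $L+2$ is precisely the edgelength $L$ plus the two cell diameters.
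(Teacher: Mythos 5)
Your proof is correct and follows essentially the same route as the paper's: bound the vertex distances by $1+L+1$ using the cell diameter and the isometry $g$ witnessing $\Omega_\sigma^\pm\neq\emptyset$, then pass to the full image via convexity of the hyperbolic distance, which forces the diameter of the geodesic simplex to be realized at two vertices. The paper's proof is just a more compressed version of the same argument.
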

\begin{proof}
The edges of $\tau^\pm_L$ have length $L$, and if $\sigma\in W$ then
the vertices of $\widetilde{\sigma}$ are at distance at most 1 from the vertices
of an isometric copy of $\tau^\pm_L$. This implies that the distance between
any two vertices of $\widetilde{\sigma}$ is at most $L+2$. 
But $\widetilde{\sigma}$ is the convex hull of its vertices and the hyperbolic distance
is convex, so the diameter of $\widetilde{\sigma}$ is 
realized by the distance between two vertices.
\end{proof}

We now fix some notations:
for $Y\subseteq \mhat$ (resp.~$Y\subseteq \matH^n$)
and $R\geq 0$ we denote by $N_R (Y)$ the \emph{closed} $R$-neighbourhood
of $Y$ in $\mhat$ (resp.~in $\matH^n$).

\begin{lemma}\label{b:lemma}
The set $W$ is finite and 
$$
\begin{array}{lllllll}
\vol (M_\vare\setminus N_{L+3} (\bb M_\vare)) &\leq & \sum_{\sigma\in W_{\rm int}^+} b^+_\sigma &\leq & 
\sum_{\sigma\in W^+} b^+_\sigma &\leq &\vol (N_L (M_\vare)),\\
\vol (M_\vare\setminus N_{L+3} (\bb M_\vare)) &\leq & \sum_{\sigma\in W_{\rm int}^-} b^-_\sigma &\leq & 
\sum_{\sigma\in W^-} b^-_\sigma &\leq &\vol (N_L (M_\vare)).
\end{array}
$$
\end{lemma}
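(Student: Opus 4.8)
The plan is to rewrite each of the sums $\sum_\sigma b^{\pm}_\sigma$ as the $\mu_{\Gamma\backslash G}$-measure of an explicit subset of $\Gamma\backslash G$ by a near-partition argument, and then to sandwich that measure between two volumes by tracking a single vertex of the reference simplex through Lemma~\ref{meas:lemma}. I treat throughout the ``$+$'' row; the ``$-$'' row is obtained verbatim after replacing $q^+_j,\tau^+_L,\Omega^+_\sigma,b^+_\sigma,\dots$ by $q^-_j,\tau^-_L,\Omega^-_\sigma,b^-_\sigma,\dots$, the point being that $g_-$ is an isometry, so every metric estimate below is insensitive to the sign.

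\emph{Finiteness of $W$.} If $\sigma\in W$ then $\mathrm{Im}(\sigma)$ meets the compact set $\interior{M_\vare}$. Fixing a compact $\widetilde D\subseteq\matH^n$ with $\pi(\widetilde D)=M_\vare$, such a $\sigma$ has a lift $\widetilde\sigma$ whose image meets $\widetilde D$, and by Lemma~\ref{tetra:lemma} this lift has diameter at most $L+2$; hence all $n+1$ vertices of $\widetilde\sigma$ lie in the relatively compact set $N_{L+2}(\widetilde D)$. Since $\lamtil$ is discrete it meets $N_{L+2}(\widetilde D)$ in a finite set, so there are only finitely many admissible vertex tuples, each determining a single straight simplex; therefore $W=W^+\cup W^-$ is finite.

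\emph{From sums to measures.} I would first show that the sets $\{\Omega^+_\sigma\}_{\sigma\in S_n^\lam}$ are pairwise disjoint and cover $\Gamma\backslash G$ up to a null set. For disjointness, suppose a representative $g$ of $[g]$ realizes $[g]\in\Omega^+_\sigma\cap\Omega^+_{\sigma'}$; writing $\widetilde B_{y_j},\widetilde B_{y'_j}$ for the cells attached to lifts of $\sigma,\sigma'$, there is $\gamma\in\Gamma$ with $g(q^+_j)\in\widetilde B_{y_j}\cap\widetilde B_{\gamma^{-1}y'_j}$ for every $j$; since distinct cells are disjoint and $\Gamma$ is torsion-free this forces $\gamma y_j=y'_j$ for all $j$, so the two lifts, hence $\sigma$ and $\sigma'$, coincide. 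The same computation shows that for a fixed lift the set $\widetilde\Omega^+_{\widetilde\sigma}=\{g\in G\,|\,g(q^+_j)\in\widetilde B_{y_j}\ \forall j\}$ meets each $\Gamma$-orbit at most once, whence $b^+_\sigma=\mu_{\Gamma\backslash G}(\Omega^+_\sigma)=\mu_G(\widetilde\Omega^+_{\widetilde\sigma})$. For the covering, the cell boundaries form a null subset of $\matH^n$, so by the normalization of Proposition~\ref{Haarprop} the set of $g$ for which some $g(q^+_j)$ lands on a cell boundary is $\mu_G$-null; for every other $g$ the cells containing $g(q^+_0),\dots,g(q^+_n)$ determine a unique $\sigma(g)\in S_n^\lam$ with $[g]\in\Omega^+_{\sigma(g)}$. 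Consequently
$$
\sum_{\sigma\in W^+} b^+_\sigma=\mu_{\Gamma\backslash G}\Big(\bigcup_{\sigma\in W^+}\Omega^+_\sigma\Big),
$$
where $[g]$ lies in the union precisely when $\mathrm{Im}(\sigma(g))$ meets $\interior{M_\vare}$; likewise the union over $W^+_{\rm int}$ corresponds to $\mathrm{Im}(\sigma(g))\subseteq\interior{M_\vare}$.

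\emph{The two inclusions and conclusion.} Now track the vertex $q^+_0$. Since $g(q^+_0)$ lies in the cell $\widetilde B_{y_0}$ (diameter $\le 1$) whose vertex $y_0$ is a vertex of $\widetilde\sigma$, and $\widetilde\sigma$ has diameter $\le L+2$ by Lemma~\ref{tetra:lemma}, every point of $\mathrm{Im}(\widetilde\sigma)$ is within $L+3$ of $g(q^+_0)$. Hence if $\pi(g(q^+_0))\in M_\vare\setminus N_{L+3}(\bb M_\vare)$ the closed ball of radius $L+3$ about $\pi(g(q^+_0))$ lies in $\interior{M_\vare}$, so $\sigma(g)\in W^+_{\rm int}$, giving
$$
\{[g]\,|\,\pi(g(q^+_0))\in M_\vare\setminus N_{L+3}(\bb M_\vare)\}\subseteq\bigcup_{\sigma\in W^+_{\rm int}}\Omega^+_\sigma .
$$
For the upper estimate, if $\sigma(g)\in W^+$ a point $z$ of $\mathrm{Im}(\sigma(g))$ lies in $\interior{M_\vare}$; a convexity estimate — the straightened image lies within the cell size of the genuine reference simplex $g(\tau^+_L)$, which has diameter $L$ — places $z$ within $L$ of $\pi(g(q^+_0))$, so $\bigcup_{\sigma\in W^+}\Omega^+_\sigma\subseteq\{[g]\,|\,\pi(g(q^+_0))\in N_L(M_\vare)\}$. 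Applying Lemma~\ref{meas:lemma} with basepoint $q^+_0$ to the Borel sets $M_\vare\setminus N_{L+3}(\bb M_\vare)$ and $N_L(M_\vare)$ converts the two extreme measures into the advertised volumes, while the middle inequality is the trivial $W^+_{\rm int}\subseteq W^+$ together with $b^+_\sigma\ge 0$. The main obstacle is the \emph{sums to measures} step: one must verify carefully that the orbit projection is injective on each $\widetilde\Omega^+_{\widetilde\sigma}$ and that the overlaps are genuinely null, so that the $b^+_\sigma$ add up exactly rather than only sub-additively; a secondary, more bookkeeping issue is to match the precise radii, the cruder vertex estimate giving the lower radius $L+3$ while the upper radius $L$ requires the sharper convexity comparison with $g(\tau^+_L)$.
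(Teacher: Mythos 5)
Your overall strategy is the paper's: realize each sum $\sum_\sigma b^{\pm}_\sigma$ as the measure of a disjoint union of the $\Omega^{\pm}_\sigma$'s, sandwich that union between two sets of the form $\{[g]\mid \pi(g(q^{\pm}_0))\in A\}$, and convert measures into volumes via Lemma~\ref{meas:lemma}. The finiteness argument, the disjointness of the $\Omega^+_\sigma$'s, and the lower inclusion with radius $L+3$ all match the paper and are correct. (Your worry about cell boundaries is unnecessary: by property~(1) of a $\Gamma$-net the cells partition $\matH^n$ exactly, so every $g$ determines a unique simplex and the $\Omega^+_\sigma$'s genuinely partition $\Gamma\backslash G$; only pairwise disjointness is needed anyway.)

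The genuine gap is in the upper inclusion. Your convexity estimate gives $\mathrm{Im}(\widetilde\sigma)\subseteq N_1\bigl(\mathrm{Im}(g\circ\tau^+_L)\bigr)$, and $\mathrm{Im}(g\circ\tau^+_L)$ has diameter $L$; hence a point $z\in\mathrm{Im}(\widetilde\sigma)\cap {\rm int}(\widetilde{M}_\vare)$ is only guaranteed to be within $L+1$ of $g(q^+_0)$, not within $L$ as you assert. As written, your argument proves $\bigcup_{\sigma\in W^+}\Omega^+_\sigma\subseteq\{[g]\mid \pi(g(q^+_0))\in N_{L+1}(M_\vare)\}$ and therefore only the weaker bound $\vol(N_{L+1}(M_\vare))$. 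The paper gets the constant $L$ by a different mechanism, and it is precisely here that property~(4) of the $\Gamma$-net — which you never invoke — is used: if $g(q^+_0)\notin N_L(\widetilde{M}_\vare)$, then $\mathrm{Im}(g\circ\tau^+_L)$, having diameter $L$ and being connected, lies entirely in one component $\widetilde{K}$ of $\ext(\widetilde{M}_\vare)$; property~(4) then forces every vertex $y_i$ of $\widetilde\sigma$ to lie \emph{in} $\widetilde{K}$ (not merely within $1$ of it), and convexity of $\widetilde{K}$ gives $\mathrm{Im}(\widetilde\sigma)\subseteq\widetilde{K}$, contradicting $\sigma\in W^+$. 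To be fair, the off-by-one is harmless downstream — Theorem~\ref{main:teo} survives with $g(L+1)$ in place of $g(L)$ — but your proof does not establish the lemma as stated, and it misses the role that condition~(4) plays in the definition of a $\Gamma$-net.
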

\begin{proof}
Let $D\subseteq \matH^n$ be a compact fundamental region for the action of $\Gamma$ on $\widetilde{M}_\vare$.
If $\sigma\in W$ there exists a lift $\widetilde{\sigma}\in S_n (\matH^n)$ 
of $\sigma$ such that ${\rm Im} (\widetilde{\sigma})
\cap D\neq \emptyset$. By Lemma~\ref{tetra:lemma} 
the diameter $\widetilde{\sigma}$
is at most $L+2$, so
${\rm Im} (\widetilde{\sigma})\subseteq N_{L+2} (D)$. However, $\widetilde{\Lambda}$ is discrete and
$N_{L+2} (D)$ is compact, so the number of straight simplices in $\matH^n$ which are contained
in $N_{L+2} (D)$ and have vertices in $\lamtil$ is finite.  
Since every $\sigma \in W$ is obtained by composing such a simplex 
with the covering projection $\pi$ we get that also $W$ is finite.

We now prove the first sequence of inequalities of the statement, the proof of the second one
being very similar.
We define subsets of hyperbolic isometries $H^+_L, K^+_L\subseteq G$ as follows: 
$$
\begin{array}{lll}
K^+_L &=& \{g\in G\, |\, g (q^+_0)\in \widetilde{M}_\vare\setminus N_{L+3} (\bb\widetilde{M}_\vare)\},\\
H^+_L &=& \{g\in G\, |\, g (q_0^+) \in N_L (\widetilde{M}_\vare)\}.
\end{array}
$$
By Lemma~\ref{meas:lemma} such sets
are $\Gamma$-invariant Borelian subsets of $G$ such that
\begin{equation}\label{volumi}
\mu_{\Gamma \backslash G} (\Gamma \backslash K^+_L) = 
\vol ({M}_\vare\setminus N_{L+3} (\bb {M}_\vare)),\qquad
\mu_{\Gamma \backslash G} (\Gamma \backslash H^+_L) = 
\vol (N_L ({M}_\vare)).
\end{equation}

We now show that the following inclusions hold:
\begin{equation}\label{inclusioni}
\Gamma \backslash K^+_L\subseteq 
\bigcup_{\sigma\in W_{\rm int}^+} \Omega^+_\sigma\subseteq 
\bigcup_{\sigma\in W^+} \Omega^+_\sigma\subseteq \Gamma \backslash H^+_L.
\end{equation}
Suppose in fact that $g\in K^+_L$. Then $g(q^+_0)$ lies in 
$\widetilde{M}_\vare\setminus N_{L+3} (\bb\widetilde{M}_\vare)$, 
so if $y_i \in\lamtil$ is the vertex of the cell containing
 $g (q^+_i)$,
then 
$y_0$ lies in $\widetilde{M}_\vare\setminus N_{L+2} (\bb\widetilde{M}_\vare)$.
Let now $\widetilde{\sigma}$ be the straight simplex with vertices $y_0,\ldots,y_n$ and set
$\sigma=\pi\circ\widetilde{\sigma}$. By Lemma~\ref{tetra:lemma} the image 
of $\widetilde{\sigma}$ lies in ${\rm int} (\widetilde{M}_\vare)$.
Since by construction $[g]\in \Omega_\sigma^+ \neq\emptyset$, this implies
$\sigma \in W^+_{\rm int}$.
We have thus proved that $\Gamma \backslash K^+_L\subseteq 
\bigcup_{\sigma\in W_{\rm int}^+} \Omega^+_\sigma$. 

On the other hand suppose $[g]\in \Omega^+_\sigma$ for some $\sigma\in W^+$.
Then there exists a lift $\widetilde{\sigma}$ of $\sigma$ with vertices 
$y_0,\ldots,y_n\in\lamtil$ such that ${\rm Im} (\widetilde{\sigma})\cap {\rm int} (M_\vare)\neq
\emptyset$ and $g(q^+_i)\in \widetilde{B}_{y_i}$ for every $i=0,\ldots,n$. 
Suppose by contradiction $g(q_0^+)\notin N_L (\widetilde{M}_\vare)$. 
Having diameter $L$ and being connected, the image of $g\circ \tau^+_L$  
is then contained in a component $\widetilde{K}$ of $\ext (\widetilde{M}_\vare)$. 
Since $g(q^+_i)\in \widetilde{B}_{y_i}$,
by property~(4) in the definition of $\Gamma$-net we have therefore $y_i\in \widetilde{K}$
for every $i$, so ${\rm Im} (\widetilde{\sigma})\subseteq \widetilde{K}$ by convexity
of $\widetilde{K}$, a contradiction. We have thus proved that $\bigcup_{\sigma\in W^+} \Omega^+_\sigma
\subseteq \Gamma\backslash H^+_L$.

Since the $\Omega^+_\sigma$'s are mutually
disjoint, condition~(\ref{inclusioni}) now implies
$$
\mu_{\Gamma\backslash G} (\Gamma \backslash K^+_L) \leq 
\sum_{\sigma\in W_{\rm int}^+} \mu_{\Gamma\backslash G} (\Omega^+_\sigma) \leq 
\sum_{\sigma\in W^+} \mu_{\Gamma\backslash G} (\Omega^+_\sigma) \leq
\mu_{\Gamma\backslash G} (\Gamma \backslash H^+_L).
$$
By equations~(\ref{volumi}) and the very definition of the $b^+_\sigma$'s,
these inequalities are in fact equivalent to the first sequence of inequalities in the statement. 
\end{proof}

We now set
$$
\zeta_{L,\vare}=\sum_{\sigma\in W} a_\sigma \sigma \in C_n (\mhat).
$$ 

\begin{prop}\label{ciclo:prop}
We have $d\zeta_{L,\vare} \in C_{n-1} ({\rm ext}(M_\vare))$,
so $\zeta_{L,\vare}$ is a relative $n$-cycle in 
$C_\ast (\widehat{M},{\rm ext}(M_\vare))$.
\end{prop}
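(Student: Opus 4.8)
The plan is to expand the boundary and to check that every $(n-1)$-simplex occurring in $d\zeta_{L,\vare}$ with non-zero coefficient has image contained in $\ext(M_\vare)$. Writing
$$
d\zeta_{L,\vare}=\sum_{\sigma\in W} a_\sigma\sum_{i=0}^n (-1)^i\, \bb_i\sigma
$$
and grouping the summands according to the straight $(n-1)$-simplex $\tau\in S_{n-1}^\lam(\mhat)$ they produce, the coefficient of a fixed $\tau$ equals $\sum_{i=0}^n (-1)^i \sum_{\sigma\in W,\,\bb_i\sigma=\tau} a_\sigma$. I would prove that this coefficient vanishes whenever ${\rm Im}(\tau)\cap {\rm int}(M_\vare)\neq\emptyset$. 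Since the remaining $(n-1)$-simplices satisfy ${\rm Im}(\tau)\subseteq\ext(M_\vare)$, this is exactly what is needed to conclude $d\zeta_{L,\vare}\in C_{n-1}(\ext(M_\vare))$.

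The heart of the matter is a cancellation carried out for each index $i$ \emph{separately}. Fix $\tau$ with ordered vertices $y_0,\dots,y_{n-1}\in\lam$, choose a lift $\widetilde\tau=[\widetilde y_0,\dots,\widetilde y_{n-1}]$, and let $\widetilde B_0,\dots,\widetilde B_{n-1}$ be the cells containing its vertices. The simplices $\sigma\in S_n^\lam(\mhat)$ with $\bb_i\sigma=\tau$ are precisely those obtained by inserting a vertex $z\in\lam$ in the $i$-th slot of $\tau$. Letting $\rho_i$ be the order-preserving bijection between $\{0,\dots,n\}\setminus\{i\}$ and $\{0,\dots,n-1\}$, I would set $\Omega_\tau^{+,i}=\{[g]\in\Gamma\backslash G\mid g(q^+_j)\in\widetilde B_{\rho_i(j)}\text{ for all }j\neq i\}$, and define $\Omega_\tau^{-,i}$ in the same way with $q^-_j$ in place of $q^+_j$. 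Because the cells $\{\widetilde B_x\}$ partition $\matH^n$, running the inserted vertex over $\lam$ partitions $\Omega_\tau^{+,i}$ into the sets $\Omega_\sigma^+$ (and similarly for the sign $-$), so that, summing over all $\sigma\in S_n^\lam(\mhat)$ with $\bb_i\sigma=\tau$,
$$
\sum_{\bb_i\sigma=\tau} b^+_\sigma=\mu_{\Gamma\backslash G}\big(\Omega_\tau^{+,i}\big),\qquad
\sum_{\bb_i\sigma=\tau} b^-_\sigma=\mu_{\Gamma\backslash G}\big(\Omega_\tau^{-,i}\big).
$$

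Next I would show $\mu_{\Gamma\backslash G}(\Omega_\tau^{+,i})=\mu_{\Gamma\backslash G}(\Omega_\tau^{-,i})$. The ordered $n$-tuples $(q^+_j)_{j\neq i}$ and $(q^-_j)_{j\neq i}=(g_-(q^+_j))_{j\neq i}$ are related by the \emph{orientation-preserving} isometry $\phi=r\circ g_-\in G$, where $r$ is the reflection in the hyperplane spanned by the points $q^-_j$, $j\neq i$: indeed $r$ fixes each $q^-_j$ (so $\phi(q^+_j)=q^-_j$ for $j\neq i$) and reverses orientation, cancelling that of $g_-$. Consequently $\Omega_\tau^{-,i}=\Omega_\tau^{+,i}\cdot\phi^{-1}$, and the right-invariance of $\mu_{\Gamma\backslash G}$ gives equality of the two measures. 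Hence $\sum_{\bb_i\sigma=\tau} a_\sigma=\tfrac12\sum_{\bb_i\sigma=\tau}(b^+_\sigma-b^-_\sigma)=0$ for every $i$, where again the sum runs over all $\sigma\in S_n^\lam(\mhat)$.

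It remains to replace this full sum by the sum over $W$ only. If ${\rm Im}(\tau)\cap {\rm int}(M_\vare)\neq\emptyset$, then any $\sigma$ with $\bb_i\sigma=\tau$ satisfies ${\rm Im}(\tau)\subseteq{\rm Im}(\sigma)$, whence ${\rm Im}(\sigma)\cap {\rm int}(M_\vare)\neq\emptyset$; therefore every such $\sigma$ with $\Omega^+_\sigma\neq\emptyset$ or $\Omega^-_\sigma\neq\emptyset$ already belongs to $W$, while the others contribute $a_\sigma=0$. So for these $\tau$ the $W$-sum equals the full sum, and the coefficient of $\tau$ in $d\zeta_{L,\vare}$ is zero, as desired. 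I expect the main obstacle to be the bookkeeping behind the identity $\sum b^+_\sigma=\mu_{\Gamma\backslash G}(\Omega_\tau^{+,i})$: one must check that, since $\Gamma$ acts freely and the cells attached to the distinct vertices of $\widetilde\tau$ are disjoint, the conditions for the $n$ indices $j\neq i$ already pin down a single representative, so that $\{g\in G\mid g(q^+_j)\in\widetilde B_{\rho_i(j)},\ j\neq i\}$ projects bijectively onto $\Omega_\tau^{+,i}$, and that the sets $\Omega^+_\sigma$ produced by the inserted vertex are pairwise disjoint and exhaust it. Finiteness of all the sums involved is guaranteed by Lemma~\ref{b:lemma}.
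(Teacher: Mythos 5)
Your proposal is correct and follows essentially the same route as the paper: group the faces of $d\zeta_{L,\vare}$ by the resulting $(n-1)$-simplex and by face index, show that the $\Omega^\pm_\sigma$ of the simplices sharing that face partition a set $\Omega^{\pm,i}_\tau$ depending only on the face, and deduce $\sum b^+_\sigma=\sum b^-_\sigma$ from the right-invariance of the Haar measure under the orientation-preserving isometry obtained by composing $g_-$ with the reflection fixing the relevant $n$ vertices of $\tau^-_L$. The paper carries this out for $i=n$ only (``the other cases being similar'') and works upstairs in $G$ with lifted sets $\widetilde\Omega^\pm_\nu$ rather than in $\Gamma\backslash G$, but these are cosmetic differences; your final observation reducing the sum over $W$ to the full sum over $S_n^\lam(\mhat)$ is exactly the step the paper handles inside the inclusion $\widetilde{\Omega}^\pm_{\nu}\subseteq\bigcup_{\sigma\in\theta^\pm_n(\nu)}\widetilde{\Omega}^\pm_\sigma$.
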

\begin{proof}
Fix a $(n-1)$-face $\nu\in S_{n-1} (\widehat{M})$ of some $\sigma \in W$.
We will show 
that if
${\rm Im} (\nu)\cap {\rm int} (M_\vare)\neq\emptyset$,
then the coefficient of $\nu$ in $d \zeta_{L,\vare}$ is null.
For every $j=0,\ldots,n$ let us set
$\theta^\pm_j(\nu)=\{ \sigma \in W^\pm\, |\, \bb_j\sigma=\nu \}$
and $\theta_j (\nu)=\theta^+_j (\nu)\cup \theta^-_j (\nu)$.
Since the coefficient of $\nu$ in $d \zeta_{L,\vare}$
is given by
$$
\sum_{j=0}^n (-1)^j\left(\sum_{\sigma \in \theta_j(\nu)} a_{\sigma}\right),
$$
it is sufficient to prove that
under the assumption
${\rm Im} (\nu)\cap {\rm int} (M_\vare)\neq\emptyset$ we have
$$
\sum_{\sigma \in \theta^+_j(\nu)} b^+_{\sigma}=
\sum_{\sigma \in \theta^-_j(\nu)} b^-_{\sigma}
$$
for every $j=0,\ldots,n$.

Let us suppose $j=n$, the other cases being similar.
Let $\widetilde{\nu}$  be a fixed lift of $\nu$ to $\matH^n$, and for every
$\sigma\in \theta_n (\nu)$ let us denote by $\widetilde{\sigma}$  the unique lift of $\sigma$ such that
$\bb_n \widetilde{\sigma}=\widetilde{\nu}$.
By construction 
we have $\widetilde{\nu}(e_i)\in \widetilde{\Lambda}$
for every $i=0,\ldots,n-1$, and we denote by $\widetilde{B}_i$ the cell 
$\widetilde{B}_{\widetilde{\nu}(e_i)}$ 
containing $\widetilde{\nu}(e_i)$. 
Now if $\sigma \in\theta_n (\nu)$ then 
$\widetilde{\sigma} (e_n)$ belongs to $\lamtil$ and 
$\widetilde{\sigma} (e_i)=\widetilde{\tau} (e_i)$
for every $i=0,\ldots,n-1$. Therefore if
$$
\widetilde{\Omega}^\pm_\sigma=\left\{g\in G\, |
\, g(q_i^\pm)\in \widetilde{B}_i\ {\rm for}\ i=0,\ldots,n-1,\ g(q_n^\pm)\in 
\widetilde{B}_{\widetilde{\sigma} (e_n)} \right\}
$$
we have that $\widetilde{\Omega}^\pm_\sigma$ is Borelian and bijectively
projects onto $\Omega^\pm_\sigma \subseteq \Gamma\backslash G$, so that
$b^\pm_\sigma =\mu_G \left(\widetilde{\Omega}^\pm_\sigma\right)$.
Let now
$$
\widetilde{\Omega}^\pm_{{\nu}}=\{g\in G\, |\, 
g(q_i^\pm)\in \widetilde{B}_i\ {\rm for\ every}\ i=0,\ldots,n-1\}.
$$
We claim that $\widetilde{\Omega}^\pm_{{\nu}}=
\bigcup_{\sigma\in \theta^\pm_n (\nu)} \widetilde{\Omega}^\pm_\sigma$.
The inclusion $\supseteq$ is obvious, while in order to get the inclusion
$\subseteq$ it is sufficient to observe that if $g\in \widetilde{\Omega}^\pm_{{\nu}}$
and $\widetilde{\sigma}$ is the straight simplex with vertices
in the cells containing $g(q^\pm_0),\ldots,g(q^\pm_n)$, then  necessarily 
${\rm Im} (\widetilde{\sigma})\cap {\rm int} (\widetilde{M}_\vare)\neq \emptyset$,
so that $\sigma=\pi\circ \widetilde{\sigma}$ belongs to $\theta^\pm_n (\nu)$
and $g$ belongs to $\widetilde{\Omega}^\pm_\sigma$.

Since the $\widetilde{\Omega}^+_\sigma$'s (resp.~the  $\widetilde{\Omega}^-_\sigma$'s)
are pairwise disjoint we finally get
$$
\begin{array}{lllllll}
\sum_{\sigma \in \theta^+_n(\nu)} b^+_{\sigma} &=&\sum_{\sigma \in \theta^+_n(\nu)} \mu_G 
\left(\widetilde{\Omega}^+_\sigma\right)&=&\mu_G \left( \bigcup_{\sigma \in \theta^+_n(\nu)}
\widetilde{\Omega}^+_\sigma
\right)&=&
\mu_G (\widetilde{\Omega}^+_{{\nu}}),\\
\sum_{\sigma \in \theta^-_n(\nu)} b^-_{\sigma} &=&\sum_{\sigma \in \theta^-_n(\nu)} \mu_G 
\left(\widetilde{\Omega}^-_\sigma\right)&=&\mu_G \left( \bigcup_{\sigma \in \theta^-_n(\nu)}
\widetilde{\Omega}^-_\sigma
\right)&=&
\mu_G (\widetilde{\Omega}^-_{{\nu}}),
\end{array}
$$
so in order to conclude we are left to show that $\mu_G (\widetilde{\Omega}^+_{{\nu}})=
 \mu_G (\widetilde{\Omega}^-_{{\nu}})$.
Recall now that a orientation-reversing isometry $g_-\in {\rm Isom}^- (\matH^n)$ exists such that 
$g_- (q^+_i)=q^-_i$ for every $i=0,\ldots,n$ and let $s_n\in {\rm Isom}^- (\matH^n)$ be the reflection
along the face of $\tau^-_L$ opposite to $q^-_n$. It is readily seen that
$g\in \widetilde{\Omega}^-_{{\nu}}$ if and only if $g s_n  g_- \in
\widetilde{\Omega}^+_{{\nu}}$, so $\widetilde{\Omega}^+_{{\nu}}=
\widetilde{\Omega}^-_{{\nu}}\cdot (s_n g_-)$. 
Now the conclusion follows from the fact that
$s_n \circ g_-\in G$ and $\mu_G$
is right-invariant.
\end{proof}

Let now $p_\ast \colon C_n (\widehat{M},{\rm ext}(M_\vare))\to
C_n (M_\vare,\partial M_\vare)$ be the map induced by the piecewise smooth retraction 
 $p\colon (\widehat{M},{\rm ext}(M_\vare))\to
(M_\vare,\partial M_\vare)$
described in Subsection~\ref{N2:sub}.
The cycle
$$
\xi_{L,\vare} = p_\ast (\zeta_{L,\vare})
$$
is our ``efficient cycle'':
in order to prove Theorem~\ref{main:teo}, in the next section
we
estimate both the $L^1$-norm of $\xi_{L,\vare}$ and the proportionality
factor between the class  of $\xi_{L,\vare}$ in $H_n (M_\vare, \bb M_\vare)$
and the fundamental class 
$[M_\vare]$
of $M_\vare$.

\section{Proof of the main theorem}\label{main:sec}

We begin by estimating the $L^1$-norm of $\xi_{L,\vare}$.

\begin{lemma}\label{l1norm:lemma}
We have
$$
\|\xi_{L,\vare}\|\leq
\vol (N_L (M_\vare)).
$$
\end{lemma}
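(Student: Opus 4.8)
The plan is to combine the fact that $p_\ast$ does not increase the $L^1$-norm with the triangle inequality applied to the explicit representative $\sum_{\sigma\in W}a_\sigma\sigma$ of $\zeta_{L,\vare}$, and then to feed in the estimates already established in Lemma~\ref{b:lemma}. Since $\xi_{L,\vare}=p_\ast(\zeta_{L,\vare})$ and $p\colon(\mhat,\ext(M_\vare))\to(M_\vare,\bb M_\vare)$ is a continuous map of pairs, the induced map on relative singular chains sends each singular simplex to a single singular simplex and is hence norm non-increasing. Therefore
$$
\|\xi_{L,\vare}\|\ \leq\ \|\zeta_{L,\vare}\|\ \leq\ \sum_{\sigma\in W}|a_\sigma|,
$$
where the middle term is the quotient $L^1$-norm on $C_n(\mhat,\ext(M_\vare))$, and the last inequality is just the definition of that quotient norm applied to the representative $\sum_{\sigma\in W}a_\sigma\sigma$ of the class.

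It then remains to estimate $\sum_{\sigma\in W}|a_\sigma|$. Since $b_\sigma^+,b_\sigma^-\geq 0$, the identity $a_\sigma=(b_\sigma^+-b_\sigma^-)/2$ yields the pointwise bound $|a_\sigma|\leq (b_\sigma^++b_\sigma^-)/2$, so that
$$
\sum_{\sigma\in W}|a_\sigma|\ \leq\ \frac{1}{2}\left(\sum_{\sigma\in W}b_\sigma^+ +\sum_{\sigma\in W}b_\sigma^-\right).
$$
At this point I would use that $\Omega_\sigma^+=\emptyset$, hence $b_\sigma^+=0$, whenever $\sigma\in W\setminus W^+$ (such a $\sigma$ lies in $W^-$, so its image already meets $\interior{M_\vare}$, and failing to lie in $W^+$ forces $\Omega_\sigma^+=\emptyset$); symmetrically $b_\sigma^-=0$ for $\sigma\in W\setminus W^-$. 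This lets me replace the sums over $W$ by sums over $W^+$ and $W^-$, and the upper estimates of Lemma~\ref{b:lemma},
$$
\sum_{\sigma\in W^+}b_\sigma^+\ \leq\ \vol (N_L (M_\vare)),\qquad \sum_{\sigma\in W^-}b_\sigma^-\ \leq\ \vol (N_L (M_\vare)),
$$
give $\sum_{\sigma\in W}|a_\sigma|\leq \vol (N_L (M_\vare))$, which is exactly the asserted bound.

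I do not expect a serious obstacle here: the whole geometric content, namely that the coefficients are controlled by the volume of a neighbourhood of $M_\vare$, is already packaged in Lemma~\ref{b:lemma}, so what remains is essentially bookkeeping. The only two points deserving a little care are the norm non-increasing property of $p_\ast$ (where the retraction $p$, as opposed to an arbitrary map, matters only insofar as it lands in the correct relative chain group $C_n(M_\vare,\bb M_\vare)$) and the passage between sums indexed by $W$ and by $W^\pm$, which relies on the vanishing of $b_\sigma^\pm$ off $W^\pm$. Note finally that the \emph{lower} bounds of Lemma~\ref{b:lemma} play no role in this estimate; they will instead be needed afterwards to control the proportionality factor between the class of $\xi_{L,\vare}$ and the fundamental class $[M_\vare]$.
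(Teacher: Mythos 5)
Your proof is correct and follows essentially the same route as the paper: both bound $\|\xi_{L,\vare}\|$ by $\|\zeta_{L,\vare}\|$ via the norm non-increasing map $p_\ast$, apply the triangle inequality $|b_\sigma^+-b_\sigma^-|\leq b_\sigma^++b_\sigma^-$, and invoke the upper bounds of Lemma~\ref{b:lemma}. Your explicit remark that $b_\sigma^\pm$ vanishes off $W^\pm$ is a detail the paper leaves implicit, but the argument is the same.
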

\begin{proof}
Since $p_\ast \colon C_n (\widehat{M},{\rm ext}(M_\vare))\to
C_n (M_\vare,\partial M_\vare)$ is norm non-increasing we have 
$$
2\|\xi_{L,\vare}\| \leq 2 \| \zeta_{L,\vare}\| 
=\sum_{\sigma \in W} \left| {b^+_\sigma -b^-_\sigma}\right|
\leq \sum_{\sigma\in W^+} b_\sigma^+ +  \sum_{\sigma\in W^-} b_\sigma^-\leq 2
\vol (N_L (M_\vare)),
$$
where the last inequality is due to Lemma~\ref{b:lemma}.
\end{proof}

\subsection{The volume form}
In order to compute the proportionality factor between
$[\xi_{L,\vare}]$ and $[M_\vare]$ we would like to evaluate the Kronecker product of $[\xi_{L,\vare}]$
with the volume coclass of $M_\vare$. 
As usual, we first have to take care of the fact that differential forms can be integrated only on smooth
simplices. So let $S^s_k (\mhat)$ (resp.~$S^s_k (\ext (M_\vare))$) be the set of \emph{smooth}
simplices with values in $\mhat$ (resp.~in $\ext(M_\vare)$),
let
$C^s_k (\mhat)$ (resp.~$C^s_k (\ext( M_\vare))$) be the free $\R$-module
generated by $S^s_k (\mhat)$ (resp.~by $S^s_k (\ext(M_\vare))$) and let
us set $C^s_k (\mhat,\ext( M_\vare))=C^s_k (\mhat)/C^s_k (\ext(M_\vare))$. A standard result of differential
topology (see \emph{e.g.}~\cite{Lee}) ensures that a chain map 
$\sm_\ast \colon C_\ast (\mhat)\to C^s_\ast (\mhat)$ exists such that:
\begin{enumerate}
\item
$\sm_k (\sigma)\in S^s_k (\mhat)$ for every $\sigma \in S_k (\mhat)$ and
$\sm_k (\sigma) \in S^s_k (\ext(M_\vare))$
for every $\sigma \in S_k (\ext( M_\vare))$;
\item
$\sm_\ast$ restricts to the identity of $C_\ast^s (M_\vare)$;
\item
if $j_\ast \colon C_\ast^s (M_\vare, \bb M_\vare)\to C_\ast (M_\vare, \bb M_\vare)$
is induced by the natural inclusion, then $\sm_\ast$ induces a map
$C_\ast (M_\vare, \bb M_\vare)\to C_\ast^s (M_\vare, \bb M_\vare)$, which will still be denoted
by $\sm_\ast$, such that $j_\ast\circ \sm_\ast$ is homotopic to the identity
of $C_\ast (M_\vare, \bb M_\vare)$. 
\end{enumerate}

We fix an orientation on $\widehat{M}$ (whence on $M_\vare$) 
by requiring that the fixed covering $\pi\colon \matH^n\to\mhat$
is orientation-preserving, and we denote by $\omega$ the volume differential form on
${\mhat}$. Since the retraction $p\colon \mhat\to M_\vare$
defined in Subsection~\ref{N2:sub} is piecewise smooth, for every $\sigma\in S_n (\mhat)$
it makes sense to integrate $\omega$ over the composition of $\sm_n (\sigma)$ with $p$.
We then define
$\Omega_{\mhat}\colon C_n (\mhat)\to\R$ as the linear extension of the map
$$
\begin{array}{cll}
S_n (\mhat) & \longrightarrow & \R \\
\sigma  & \longmapsto & \int_{p\circ \sm_n (\sigma)} \omega.
\end{array}
$$

By property~(1) of the smoothing operator,
if ${\rm Im} (\sigma)\subseteq {\rm ext} (M_\vare)$
then ${\rm Im} (\sm_n(\sigma))\subseteq {\rm ext} (M_\vare)$, so
${\rm Im} (p\circ\sm_n(\sigma))\subseteq \bb M_\vare$ and $\Omega_{\mhat} (\sigma)=0$.
Moreover, since $\sm_\ast $ is a chain map, if $c\in C_n (\mhat)$ is a boundary then
$\sm_n (c)$ is the boundary of a smooth $(n+1)$-chain, so $p_\ast (\sm_n (c))$ is the
boundary of a piecewise smooth $(n+1)$-chain and 
as a consequence  of Stokes' Theorem
we have $\Omega_{\mhat} (c)=0$. It follows that $\Omega_{\mhat}$ is a cocycle, and
defines therefore a cohomology class $[\Omega_{\mhat}]\in H^n (\mhat,{\rm ext} (M_\vare))$.

Recall now that for every topological pair $(X,Y)$ there exists a 
well-defined pairing (usually called \emph{Kronecker pairing})
given by
$$
\langle \cdot, \cdot \rangle\colon H^p (X,Y) \times H_p (X,Y)  \to \R,\quad
\langle [\varphi], [c]\rangle =\varphi (c).
$$

Let $\zeta_{L,\vare} \in C_n (\mhat, {\rm ext} (M_\vare))$ be the cycle constructed above such that
$\xi_{L,\vare}=p_\ast (\zeta_{L,\vare})$, and let $i\colon (M_\vare, \bb M_\vare)\to (\mhat,{\rm ext} (M_\vare))$
be the inclusion.

\begin{lemma}\label{duality:lemma}
We have 
$$
\left[ \xi_{L,\vare}\right] = \frac{\Omega_{\mhat} (\zeta_{L,\vare})}{\vol (M_\vare)} [M_\vare].
$$
\end{lemma}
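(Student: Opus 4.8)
The plan is to exploit the fact that $H_n(M_\vare,\bb M_\vare)\cong\R$ is one-dimensional, so that $[\xi_{L,\vare}]=\lambda\,[M_\vare]$ for a unique scalar $\lambda\in\R$, and to pin down $\lambda$ by evaluating the Kronecker pairing of both sides against the volume coclass. First I would record that $\xi_{L,\vare}$ really is a relative cycle in $C_n(M_\vare,\bb M_\vare)$: by Proposition~\ref{ciclo:prop} we have $d\zeta_{L,\vare}\in C_{n-1}(\ext(M_\vare))$, and since $p(\ext(M_\vare))\subseteq\bb M_\vare$, applying $p_\ast$ gives $d\xi_{L,\vare}=p_\ast(d\zeta_{L,\vare})\in C_{n-1}(\bb M_\vare)$, so $[\xi_{L,\vare}]\in H_n(M_\vare,\bb M_\vare)$ is well defined. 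Writing $[\xi_{L,\vare}]=\lambda\,[M_\vare]$ and applying the pairing with $i^\ast[\Omega_{\mhat}]\in H^n(M_\vare,\bb M_\vare)$ yields $\langle i^\ast[\Omega_{\mhat}],[\xi_{L,\vare}]\rangle=\lambda\,\langle i^\ast[\Omega_{\mhat}],[M_\vare]\rangle$, so it suffices to compute the two pairings and to check that the second is nonzero.

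For the pairing with $[M_\vare]$, naturality of the Kronecker product gives $\langle i^\ast[\Omega_{\mhat}],[M_\vare]\rangle=\langle[\Omega_{\mhat}],i_\ast[M_\vare]\rangle$. Choosing a fundamental cycle $z$ for $M_\vare$ whose simplices all have image in $M_\vare$, and using that $p$ restricts to the identity on $M_\vare$, for each such simplex $\sigma$ we have $p\circ\sm_n(\sigma)=\sm_n(\sigma)$; hence $\Omega_{\mhat}(z)=\int_{\sm_n(z)}\omega$. By property~(3) of the smoothing operator $\sm_n(z)$ represents $[M_\vare]$ in smooth homology, and integrating the volume form over the fundamental class returns the total volume, so $\langle i^\ast[\Omega_{\mhat}],[M_\vare]\rangle=\vol(M_\vare)$. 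For the pairing with $[\xi_{L,\vare}]$, I would again use naturality together with $\xi_{L,\vare}=p_\ast(\zeta_{L,\vare})$ to obtain $\langle i^\ast[\Omega_{\mhat}],[\xi_{L,\vare}]\rangle=\langle[\Omega_{\mhat}],i_\ast p_\ast[\zeta_{L,\vare}]\rangle$. Since the retraction $p\colon(\mhat,\ext(M_\vare))\to(M_\vare,\bb M_\vare)$ of Subsection~\ref{N2:sub} is a homotopy equivalence of pairs whose homotopy inverse is the inclusion $i$ (both arise from the nearest point deformation retraction, so that $i\circ p\simeq\mathrm{id}$ as maps of pairs), the map $i_\ast p_\ast$ is the identity of $H_n(\mhat,\ext(M_\vare))$; therefore $\langle i^\ast[\Omega_{\mhat}],[\xi_{L,\vare}]\rangle=\langle[\Omega_{\mhat}],[\zeta_{L,\vare}]\rangle=\Omega_{\mhat}(\zeta_{L,\vare})$.

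Combining the two computations with the identity $\langle i^\ast[\Omega_{\mhat}],[\xi_{L,\vare}]\rangle=\lambda\,\langle i^\ast[\Omega_{\mhat}],[M_\vare]\rangle$ gives $\Omega_{\mhat}(\zeta_{L,\vare})=\lambda\,\vol(M_\vare)$, whence $\lambda=\Omega_{\mhat}(\zeta_{L,\vare})/\vol(M_\vare)$, which is exactly the asserted equality. The two pairing computations carry all the content: the delicate point in the first is the identification $\int_{\sm_n(z)}\omega=\vol(M_\vare)$, which relies on the chosen orientation (making $\pi$ orientation-preserving) and on $p|_{M_\vare}=\mathrm{id}$, together with property~(3) of $\sm_\ast$ to legitimate the integration; the delicate point in the second is the homotopy $i\circ p\simeq\mathrm{id}$ of pairs, which is where the geometry of the nearest point retraction enters. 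I expect the latter to be the main obstacle, since one must verify that the retraction respects the pair structure $(\mhat,\ext(M_\vare))$ throughout the homotopy so that $i_\ast p_\ast$ indeed acts as the identity on relative homology.
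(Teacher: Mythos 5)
Your proposal is correct and follows essentially the same route as the paper: both arguments rest on the computation $\langle [\Omega_{\mhat}], i_\ast [M_\vare]\rangle = \vol(M_\vare)$, the one-dimensionality of $H_n(\mhat,\ext(M_\vare))\cong H_n(M_\vare,\bb M_\vare)$, and the fact that $p_\ast$ and $i_\ast$ are mutually inverse on these groups. The only (cosmetic) difference is that the paper works upstairs, writing $[\zeta_{L,\vare}]$ as a multiple of $i_\ast[M_\vare]$ and then applying $p_\ast$, whereas you work downstairs by pairing $[\xi_{L,\vare}]$ with $i^\ast[\Omega_{\mhat}]$ and using $i_\ast p_\ast=\mathrm{id}$ (which in fact already follows from $p\circ i=\mathrm{id}$ and one-dimensionality, without needing the full homotopy $i\circ p\simeq\mathrm{id}$ of pairs that you flag as the main obstacle).
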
 
\begin{proof}
We begin by recalling that 
$$
\left\langle \left[ \Omega_{\mhat}\right], i_\ast \left(\left[M_\vare\right]\right)\right\rangle
=\vol \left(M_\vare\right).
$$ 
In fact, if $\varphi\colon \Delta_n\to M_\vare$ is a positively oriented smooth
embedding, then by the very definitions
we have that $\Omega_{\mhat} (i_\ast (\varphi))$ equals the hyperbolic volume
of ${\rm Im} (\varphi)$. We may now represent the fundamental class
$[M_\vare]\in H_n (M_\vare, \bb M_\vare)$ by a finite sum of positively oriented
embeddings $\varphi_i\colon \Delta_n\to T_i$, $i\in I$, where 
$\{T_i\}_{i\in I}$ is a finite smooth triangulation of $M_\vare$.
So
$$
\left\langle \left[ \Omega_{\mhat}\right], i_\ast \left(\left[M_\vare\right]\right)\right\rangle 
=\Omega_{\mhat} \left(i_\ast \left(\sum_{i\in I} 
\varphi_i \right)\right)=
\sum_{i\in I} \vol (T_i)=\vol \left( M_\vare\right).
$$
Since $H_n (\widehat{M},{\rm ext} (M_\vare))\cong H_n(M_\vare, \bb M_\vare)\cong \R$,
this readily implies 
$$
[\zeta_{L,\vare}]=\frac{\langle [\Omega_{\mhat}], [\zeta_{L,\vare}]\rangle}
{ \vol (M_\vare)} \cdot i_\ast ([M_\vare])= \frac{\Omega_{\mhat} (\zeta_{L,\vare})}
{ \vol (M_\vare)} \cdot i_\ast ([M_\vare]),
$$
whence
$$
[\xi_{L,\vare}]=p_\ast ([\zeta_{L,\vare}])=\frac{\Omega_{\mhat} (\zeta_{L,\vare})}
{\vol (M_\vare)} p_\ast (i_\ast ([M_\vare]))=
\frac{\Omega_{\mhat} (\zeta_{L,\vare})}{\vol (M_\vare)} [M_\vare].
$$
\end{proof}

In order to estimate the proportionality coefficient between
$[\xi_{L,\vare}]$ and $[M_\vare]$ we are therefore left to compute $\Omega_{\mhat} (\zeta_{L,\vare})$.

Let $\tau^\pm_L\subseteq \matH^n$ be the regular $n$-simplex
with edgelength $L$ and vertices $q_0^\pm,\ldots, q_n^\pm$ introduced above.
Let $R^\pm_L$ be the set of all straight $n$-simplices $\sigma$ in $\matH^n$ 
satisfying the following property:
there exists
$g\in {\rm Isom}^+ (\matH^n)$ such that the distance between $g(q^\pm_i)$ and
the $i$-th vertex of $\sigma$ is at most one for every
$i=0,\ldots, n$. 
Let 
$$
V_L=\inf \{\vola (\sigma)\, |\, \sigma \in R^+_L\} = -\sup \{\vola (\sigma)\, |\, \sigma \in R^-_L\},
$$
where $\vola$ is the \emph{signed} volume of $\sigma$, \emph{i.e.}~the value obtained
by integrating $\omega$ on $\sigma$ (so $|\vola (\sigma)|=\vol ({\rm Im}(\sigma))$). 

It is easily seen that $L_0>0$ exists such that for every element $\sigma \in R^+_L$
(resp.~$\sigma\in R^-_L$) we have $\vola (\sigma)>0$ (resp.~$\vola (\sigma)<0$).
Moreover, the hyperbolic volume of a geodesic simplex
with vertices on $\matH^n \cup \bb \matH^n$ is a continuous function of its vertices
(see~\cite[Theorem 11.3.2]{Ratcliffe}). As a consequence we have

\begin{equation}\label{voleq}
\lim\limits_{L\to\infty} V_L=v_n. 
\end{equation}

\begin{prop}\label{stimadiff:prop}
Let $L\geq L_0$. Then
$$
\Omega_{\mhat} (\zeta_{L,\vare}) \geq V_L \cdot \vol (M_\vare\setminus N_{L+3} (\bb M_\vare)).
$$
\end{prop}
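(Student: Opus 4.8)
The plan is to expand $\Omega_{\mhat}(\zeta_{L,\vare})$ over the finitely many simplices of $W$ and to control each summand by a signed hyperbolic volume. Since $a_\sigma=(b^+_\sigma-b^-_\sigma)/2$, while $b^+_\sigma=0$ for $\sigma\notin W^+$ and $b^-_\sigma=0$ for $\sigma\notin W^-$, linearity of $\Omega_{\mhat}$ gives
$$2\,\Omega_{\mhat}(\zeta_{L,\vare})=\sum_{\sigma\in W^+}b^+_\sigma\,\Omega_{\mhat}(\sigma)-\sum_{\sigma\in W^-}b^-_\sigma\,\Omega_{\mhat}(\sigma).$$
I would split each of the two sums according to whether $\sigma$ lies in $W^\pm_{\rm int}$ or in $W^\pm_\ext:=W^\pm\setminus W^\pm_{\rm int}$, and estimate the interior and the exterior parts separately.

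For the interior simplices the computation is exact. If $\sigma\in W_{\rm int}$ then ${\rm Im}(\sigma)\subseteq{\rm int}(M_\vare)$, so $\sigma$ is a smooth chain supported in $M_\vare$; hence $\sm_n(\sigma)=\sigma$ by property~(2) of the smoothing operator and $p$ restricts to the identity there, whence $\Omega_{\mhat}(\sigma)=\vola(\sigma)$. Moreover every $\sigma\in W^+$ (resp.\ $\sigma\in W^-$) admits a lift lying in $R^+_L$ (resp.\ $R^-_L$): indeed $\Omega^\pm_\sigma\neq\emptyset$ means that some $g$ carries each $q^\pm_j$ into the cell of the corresponding vertex of the lift, and the cells have diameter at most $1$. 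Since $L\geq L_0$, the definition of $V_L$ then yields $\vola(\sigma)\geq V_L$ for $\sigma\in W^+_{\rm int}$ and $\vola(\sigma)\leq -V_L$ for $\sigma\in W^-_{\rm int}$.

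The main point is to show that the exterior simplices contribute non-negatively. For these $p$ genuinely collapses part of the image onto the $(n-1)$-manifold $\bb M_\vare$, so $\Omega_{\mhat}(\sigma)\neq\vola(\sigma)$ in general; nonetheless I claim that the sign of $\Omega_{\mhat}(\sigma)$ is still governed by the orientation of $\sigma$. The retraction $p$ is a composition of nearest-point retractions onto the convex sets $M$ and $M_\vare$, so its Jacobian determinant is non-negative everywhere (it equals $1$ on ${\rm int}(M_\vare)$ and $0$ on $\ext(M_\vare)$, where the image is $(n-1)$-dimensional and the rank drops). Writing $\Omega_{\mhat}(\sigma)=\int_{\sm_n(\sigma)}p^*\omega$ and applying the chain rule, the quantity $\int_{p\circ\sigma}\omega$ equals the signed volume of the part of $\sigma$ lying over ${\rm int}(M_\vare)$; since every $\sigma\in W^+$ is positively oriented and every $\sigma\in W^-$ is negatively oriented (again because $L\geq L_0$), this forces $\Omega_{\mhat}(\sigma)\geq 0$ for $\sigma\in W^+$ and $\Omega_{\mhat}(\sigma)\leq 0$ for $\sigma\in W^-$. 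The delicate point is that the smoothing $\sm_n$ must not affect this conclusion: this should follow from the facts that $p^*\omega$ is closed ($d\omega=0$), that it vanishes on $\ext(M_\vare)$ (where $p$ has $(n-1)$-dimensional image), and that the chain homotopy relating $\sm_\ast$ to the identity stays inside $\ext(M_\vare)$ by property~(1); by Stokes' theorem together with Proposition~\ref{ciclo:prop} (which gives $d\zeta_{L,\vare}\in C_{n-1}(\ext(M_\vare))$) one may then replace $\sm_n(\sigma)$ by $\sigma$ throughout.

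Finally I would assemble the estimate. By the previous paragraph the exterior contribution $\tfrac12\big(\sum_{\sigma\in W^+_\ext}b^+_\sigma\,\Omega_{\mhat}(\sigma)-\sum_{\sigma\in W^-_\ext}b^-_\sigma\,\Omega_{\mhat}(\sigma)\big)$ is non-negative and may be discarded in a lower bound. For the interior part I use $\Omega_{\mhat}(\sigma)=\vola(\sigma)$, the inequalities $\vola(\sigma)\geq V_L$ on $W^+_{\rm int}$ and $\vola(\sigma)\leq -V_L$ on $W^-_{\rm int}$, and Lemma~\ref{b:lemma}, which gives $\sum_{\sigma\in W^+_{\rm int}}b^+_\sigma\geq\vol(M_\vare\setminus N_{L+3}(\bb M_\vare))$ and similarly for the minus sum; this yields
$$\Omega_{\mhat}(\zeta_{L,\vare})\geq \tfrac12\Big(V_L\!\!\sum_{\sigma\in W^+_{\rm int}}\!\!b^+_\sigma+V_L\!\!\sum_{\sigma\in W^-_{\rm int}}\!\!b^-_\sigma\Big)\geq V_L\cdot\vol\big(M_\vare\setminus N_{L+3}(\bb M_\vare)\big).$$
The main obstacle is exactly this middle step: justifying the orientation/Jacobian argument for the exterior simplices and checking that the smoothing operator does not spoil the sign---equivalently, that retracting a positively oriented geodesic simplex onto the convex core never produces negative signed volume. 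Granting this, the rest is bookkeeping built on Lemma~\ref{b:lemma} and the limit defining $V_L$.
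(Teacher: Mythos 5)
Your proposal is correct and follows essentially the same route as the paper: the same splitting into interior and exterior simplices, the same use of Lemma~\ref{b:lemma} for the interior sums, and the same observation that for $L\geq L_0$ the orientation constraint forces the exterior terms $a_\sigma\Omega_{\mhat}(\sigma)$ to be non-negative. The ``delicate point'' you flag about the smoothing operator is treated no more carefully in the paper, which simply asserts that $\Omega_{\mhat}(\sigma)$ equals the signed volume of the portion of ${\rm Im}(\sigma)$ contained in $M_\vare$.
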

\begin{proof}
With notations as in Subsection~\ref{smearing:sub} we 
set
$$
\zeta_{\rm int}^+=\sum_{\sigma\in W_{\rm int}^+} b^+_\sigma \sigma, \quad
\zeta_{\rm int}^-=- \sum_{\sigma\in W_{\rm int}^-} b^-_\sigma \sigma, \quad
\zeta_{\rm ext}=\sum_{\sigma\in W_{\ext}} a_\sigma \sigma.
$$

Take $\sigma \in W$.
By the very definitions $\Omega_{\mhat} (\sigma)$ is the signed volume
of the portion of ${\rm Im} (\sigma)$ contained in $M_\vare$, so if 
$\sigma \in W_{\rm int}^+$ (resp. $\sigma\in  W_{\rm int}^-$) we have 
$$
\Omega_{\mhat} (b^+_\sigma \sigma)=b^+_\sigma \vola (\sigma)\geq b^+_\sigma \cdot V_L,
\qquad 
\Omega_{\mhat} (-b^-_\sigma \sigma)=-b^-_\sigma \vola (\sigma)\geq b^-_\sigma \cdot V_L.
$$
By Lemma~\ref{b:lemma} we now get
\begin{equation}\label{stima:eq}
\begin{array}{lllll}
\Omega_{\mhat} (\zeta^+_{\rm int})&\geq& \left( \sum_{\sigma\in W^+_{\rm int}}  b^+_\sigma \right)
V_L &\geq & \vol (M_\vare\setminus N_{L+3} (M_\vare))\cdot V_L,\\
\Omega_{\mhat} (\zeta^-_{\rm int})&\geq &\left( \sum_{\sigma\in W^-_{\rm int}}  b^-_\sigma \right)
V_L &\geq & \vol (M_\vare\setminus N_{L+3} (M_\vare))\cdot V_L.
\end{array}
\end{equation}

Moreover, since $L\geq L_0$ 
if $b_\sigma^+\neq 0$ then $\sigma$ is positively oriented, while 
if $b_\sigma^-\neq 0$ then $\sigma$ is negatively oriented. This easily implies that
for every
$\sigma\in W_\ext$ we have $\Omega_{\mhat} (a_\sigma \sigma)\geq 0$, so  
$\Omega_{\mhat} (\zeta_\ext)\geq 0$.
Together with inequalities~(\ref{stima:eq}) and the fact that
$\zeta_{L,\vare}=(\zeta^+_{\rm int}+\zeta^-_{\rm int})/2+\zeta_{\rm ext}$, 
this readily implies the conclusion.
\end{proof}

\begin{cor}\label{stimafinale}
We have
\begin{equation}\label{stimafin:eq}
\frac{\vol (M)}{|| \overline{M}||}\geq \frac{ 
\vol (M_\vare\setminus N_{L+3}(\bb M_\vare))}{\vol (N_L(M_\vare))}\cdot V_L
\end{equation}
\end{cor}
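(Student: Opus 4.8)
The plan is to assemble Corollary~\ref{stimafinale} directly from the three estimates proved just above it, combining the norm bound, the duality statement, and the volume estimate. First I would invoke Lemma~\ref{duality:lemma}, which identifies $[\xi_{L,\vare}]$ as $\frac{\Omega_{\mhat}(\zeta_{L,\vare})}{\vol(M_\vare)}[M_\vare]$. Since $[M_\vare]$ is the fundamental class of the pair $(M_\vare,\bb M_\vare)$ and the simplicial volume is defined as the seminorm of that class, the cycle $\xi_{L,\vare}$ is a \emph{multiple} of a fundamental cycle, and comparing seminorms gives
$$
\|M_\vare\| \;=\; \frac{\vol(M_\vare)}{\Omega_{\mhat}(\zeta_{L,\vare})}\,\|[\xi_{L,\vare}]\| \;\leq\; \frac{\vol(M_\vare)}{\Omega_{\mhat}(\zeta_{L,\vare})}\,\|\xi_{L,\vare}\|,
$$
where the inequality is just the definition of the seminorm on homology (any representing cycle bounds it from above). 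Here I must assume $\Omega_{\mhat}(\zeta_{L,\vare})>0$, which is guaranteed for $L\geq L_0$ by Proposition~\ref{stimadiff:prop} together with the fact that $\vol(M_\vare\setminus N_{L+3}(\bb M_\vare))>0$ for $\vare$ small.

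Next I would substitute the two remaining estimates. By Lemma~\ref{l1norm:lemma} we have $\|\xi_{L,\vare}\|\leq \vol(N_L(M_\vare))$, and by Proposition~\ref{stimadiff:prop} we have $\Omega_{\mhat}(\zeta_{L,\vare})\geq V_L\cdot\vol(M_\vare\setminus N_{L+3}(\bb M_\vare))$. Plugging these into the displayed inequality yields
$$
\|M_\vare\| \;\leq\; \frac{\vol(M_\vare)\cdot \vol(N_L(M_\vare))}{V_L\cdot \vol(M_\vare\setminus N_{L+3}(\bb M_\vare))},
$$
and taking reciprocals (all quantities being positive) gives
$$
\frac{\vol(M_\vare)}{\|M_\vare\|} \;\geq\; \frac{\vol(M_\vare\setminus N_{L+3}(\bb M_\vare))}{\vol(N_L(M_\vare))}\cdot V_L.
$$

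Finally I would pass from $M_\vare$ to $\overline{M}$ and from $\vol(M_\vare)$ to $\vol(M)$. Since $\overline{M}$ is homeomorphic to $M_\vare$ (as recalled in Subsection~\ref{N1:sub}), the homotopy invariance of simplicial volume gives $\|\overline{M}\|=\|M_\vare\|$. The subtle point is the numerator: the right-hand side of~(\ref{stimafin:eq}) as stated involves $\vol(M)$ rather than $\vol(M_\vare)$, so one must check the inequality survives the replacement. Since $M_\vare\subseteq M$ we have $\vol(M_\vare)\leq\vol(M)$, and the inequality I derived has $\vol(M_\vare)$ on the left; rewriting $\vol(M)/\|\overline{M}\|\geq\vol(M_\vare)/\|M_\vare\|$ uses exactly $\vol(M)\geq\vol(M_\vare)$ and $\|\overline{M}\|=\|M_\vare\|$, after which the chain of inequalities closes. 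The only real obstacle is bookkeeping the positivity hypotheses ($L\geq L_0$, $\vare$ small enough that $M_\vare\setminus N_{L+3}(\bb M_\vare)$ has positive volume) so that every reciprocal and every seminorm comparison is legitimate; the analytic content is entirely carried by the three results already established.
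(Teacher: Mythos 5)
Your proposal is correct and follows essentially the same route as the paper: apply Lemma~\ref{duality:lemma} to express $\|\overline{M}\|=\|M_\vare\|$ as the norm of the appropriate multiple of $[\xi_{L,\vare}]$, bound it using Lemma~\ref{l1norm:lemma} and Proposition~\ref{stimadiff:prop}, and finish with $\vol(M)\geq\vol(M_\vare)$. Your extra attention to the positivity of $\Omega_{\mhat}(\zeta_{L,\vare})$ (needed to take reciprocals) is a legitimate refinement of the same argument, not a different one.
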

\begin{proof}
Since $\overline{M}$ is diffeomorphic to $M_\vare$
we have $\| \overline{M}\|=\| M_\vare\|=\| [M_\vare]\|$. 
Thus from Lemma~\ref{l1norm:lemma}, Lemma~\ref{duality:lemma}
and Proposition~\ref{stimadiff:prop} we get
$$
{|| \overline{M}||}=
\left\|\frac{\vol(M_\vare)}{\langle [\Omega_{\mhat}],\xi_{L,\vare}\rangle } 
\left[ \xi_{L,\vare}\right]\right\|
\leq \left| \frac{\vol(M_\vare)}{\langle [\Omega_{\mhat}],\xi_{L,\vare}\rangle }\right| \|\xi_{L,\vare}\|
\leq
\frac{\vol (N_L(M_\vare)) \cdot \vol (M_\vare)}{V_L \cdot 
\vol (M_\vare\setminus N_{L+3}(\bb M_\vare))}.
$$
Since $\vol (M)\geq \vol (M_\vare)$, this readily implies the conclusion.
\end{proof}

\subsection{The final step}
In order to conclude we now need some estimates on the volume of 
$L$-neighbourhoods of geodesic hypersurfaces in hyperbolic
manifolds.
For $t\geq 0$ let $g(t)= 2\int_0^L \cosh^{n-1} (t)\, {\rm d} t$.
An easy computation (see \emph{e.g.}~\cite{Ara}) shows 
that if $A$ is an embedded totally geodesic hypersurface
in a hyperbolic $n$-manifold $X$, then the $n$-dimensional volume 
of any 
embedded tubular $t$-neighbourhood of $A$ in $X$ is given by
$g(t)\cdot \vol (A)$.

\begin{lemma}\label{vol:lemma}
For every $L>0$ we have
$$
\lim_{\vare \to 0} {\vol (N_L (\bb M_\vare))}\leq 
g(L)\cdot {\vol (\bb M)}.
$$ 
\end{lemma}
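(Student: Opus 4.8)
The plan is to decompose $\bb M_\vare$ into its geodesic and cuspidal parts, estimate the $L$-neighbourhood of each, and observe that only the geodesic part survives in the limit $\vare\to 0$. Recall from Subsection~\ref{N1:sub} that $\bb M_\vare$ is the disjoint union of the totally geodesic boundary $\bb M$ (which does not depend on $\vare$) together with one flat cross-section of each cusp; write $C_i^\vare$ for the cross-section sitting at depth $d_i(\vare)$ inside the $i$-th cusp $T_i\times[0,\infty)$. Subadditivity of volume gives
$$
\vol (N_L(\bb M_\vare)) \leq \vol (N_L(\bb M)) + \sum_{i=1}^r \vol (N_L(C_i^\vare)),
$$
so it suffices to bound the first term by $g(L)\vol(\bb M)$ and to show that each remaining term tends to $0$.

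For the geodesic term I would use the tube formula recalled just before the statement. Since $\bb M$ is a \emph{closed} totally geodesic hypersurface of $\mhat$, the exponential map on normal vectors of length at most $L$ surjects onto $N_L(\bb M)$ and pulls back the Riemannian volume form to a form of total mass $g(L)\vol(\bb M)$; by the area formula the volume of the image is at most this mass. Hence
$$
\vol (N_L(\bb M)) \leq g(L)\,\vol (\bb M),
$$
uniformly in $\vare$ (with equality exactly when the tubular neighbourhood is embedded).

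The real work is the cuspidal estimate. Each cusp is isometric to a horoball modulo a parabolic group, so in the upper half-space model the hyperbolic volume of the cross-section at depth $d$ is proportional to $e^{-(n-1)d}$, and so is the volume of the entire sub-cusp lying beyond depth $d$. The depth function is $1$-Lipschitz, so every point within distance $L$ of $C_i^\vare$ has depth in the interval $[d_i(\vare)-L,\,d_i(\vare)+L]$; moreover, since $C_i^\vare$ is a \emph{full} horizontal slice of the cusp, the distance to it is bounded below by the vertical displacement, so nothing escapes laterally and $N_L(C_i^\vare)$ is genuinely contained in the corresponding slab (which lies inside the cusp once $d_i(\vare)>L$). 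Integrating the volume form over this slab bounds $\vol(N_L(C_i^\vare))$ by a constant times $e^{(n-1)L}e^{-(n-1)d_i(\vare)}$. Finally, the constraint $\vol(M\setminus M_\vare)\le\vare$ forces $d_i(\vare)\to\infty$ as $\vare\to 0$ for every $i$, while $L$ is fixed; hence each cuspidal term vanishes in the limit, and combining the three estimates yields
$$
\limsup_{\vare\to 0}\vol (N_L(\bb M_\vare)) \leq g(L)\,\vol (\bb M),
$$
which is the stated inequality. I expect the contained-in-the-slab claim to be the only delicate point, precisely because it is where the closedness of the cross-sections (ruling out lateral escape) and the smallness of $\vare$ (keeping the slab inside the cusp) both enter.
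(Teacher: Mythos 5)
Your proof is correct and follows the same overall plan as the paper's: split $\bb M_\vare$ into the geodesic boundary $\bb M$ and the cusp cross-sections, show that the cuspidal contribution to $\vol(N_L(\cdot))$ vanishes as $\vare\to 0$, and bound $\vol (N_L(\bb M))$ by $g(L)\cdot\vol(\bb M)$ using the tube formula. The one step where you genuinely diverge is the geodesic estimate. The paper argues component by component: for each component $B$ of $\bb M$ it passes to the covering $X\to\mhat$ associated to the image of $\pi_1(B)$, where $X\cong B\times(-\infty,+\infty)$ and the $L$-tube around the totally geodesic copy of $B$ is \emph{embedded}, hence of volume exactly $g(L)\cdot\vol(B)$; since the covering projection is a local isometry carrying this tube onto $N_L(B)$, the inequality $\vol(N_L(B))\le g(L)\cdot\vol(B)$ follows. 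You instead parametrize $N_L(\bb M)$ directly by the normal exponential map on vectors of length at most $L$, using that every point of $N_L(\bb M)$ is joined to a nearest point of the compact hypersurface $\bb M$ by a geodesic meeting it orthogonally, and then apply the area formula to the Fermi-coordinate Jacobian $\cosh^{n-1}(t)$. Both routes are sound: the covering trick avoids any Jacobian or area-formula machinery at the cost of introducing the cover, while your argument is self-contained and treats $\bb M$ all at once. Your cuspidal estimate is also fine, and in fact more detailed than the paper's, which merely asserts that $\lim_{\vare\to 0}\vol(N_L(T_\vare))=0$ for the union $T_\vare$ of the deleted cusps' boundaries; your slab argument (the depth function is $1$-Lipschitz, the slab volume is of order $e^{(n-1)L}e^{-(n-1)d_i(\vare)}$, and the constraint $\vol(M\setminus M_\vare)\le\vare$ forces $d_i(\vare)\to\infty$) is precisely the justification one would supply for that assertion.
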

\begin{proof}
Recall that $\bb M_\vare=\bb M \cup T_\vare$, 
where $T_\vare$ is the union of the boundaries of the deleted cusps.
Therefore $N_L( \bb M_\vare)=N_L (\bb M)\cup N_L (T_\vare)$ and it is easily seen that 
$\lim_{\vare \to 0} \vol (N_L (T_\vare))=0$ whence 
$\lim _{\vare \to 0} {\vol (N_L (\bb M_\vare))}=
{\vol (N_L (\bb M))}$.

Let now $B$ be a connected component of $\bb M$ and
let $X\to \widehat{M}$ be the Riemannian covering associated to the image of
$\pi_1 (B)$ into $\pi_1 (M)$. Then $X$ is diffeomorphic to $B\times (-\infty,+\infty)$
and contains a totally geodesic hypersurface $B\times \{0\}$ isometric to $B$. The $L$-neighbourhood
of $B\times \{0\}$ in $X$ is embedded and has therefore volume $g(L)\cdot \vol (B)$. Since
the projection $X\to \mhat$ is a local isometry and maps (possibly not injectively)
such a neighbourhood onto
$N_L (B)\subseteq \mhat$, it follows that $\vol (N_L (B))\leq g(L)\cdot \vol (B)$.
If $B_1,\ldots, B_k$ are the components of $\bb M$ we then have 
$$
\vol (N_L (\bb M))\leq
\sum_{i=1}^k \vol (N_L (B_i))
\leq g(L)\left( \sum_{i=1}^k \vol (B_i) \right)
=g(L)\cdot \vol (\bb M),
$$
whence the conclusion.
\end{proof}

Let us put the estimate of Lemma~\ref{vol:lemma} into inequality~(\ref{stimafin:eq}).
We have 
$$
\vol (M_\vare\setminus N_{L+3} (\bb M_\vare))\geq \vol (M_\vare) - 
\vol(N_{L+3}(\bb M_\vare)),
$$ 
so since $\lim_{\vare \to 0} \vol (M_\vare)=\vol (M)$
we get 
$$
\lim_{\vare \to 0}  \vol (M_\vare\setminus N_{L+3} (\bb M_\vare))\geq
\vol (M) - g(L+3)\cdot \vol (\bb M).
$$  
In the same way we get $\lim_{\vare \to 0} \vol (N_L (M_\vare))\leq \vol (M) + g(L)\cdot 
\vol (\bb M)$. 
Therefore, if $r=\vol (\bb M)/\vol (M)$, then
passing to the limit in the right hand side of~(\ref{stimafin:eq}) we obtain
\begin{equation}\label{last}
\frac{\vol (M)}{|| \overline{M}||}\geq \frac{ 1-r\cdot  g(L+3)}
{1+r \cdot g(L)}\cdot V_L.
\end{equation}
Let now $\eta<v_n$ be given. By equation~(\ref{voleq}) there exists $L_1\geq L_0$ (only depending on $n$ and
$\eta$) such that
$V_{L_1}> v_n-\eta/2$. Since $\lim_{r\to 0} (1-r\cdot g(L_1+3))/(1+r \cdot g(L_1))=1$, there exists $k>0$
(only depending on $L_1$, that is on $n$ and $\eta$) 
such that $(1-r \cdot g(L_1+3))/(1+r \cdot g(L_1))>(v_n-\eta)/(v_n-\eta/2)$ for every $r\leq k$.
Inequality~(\ref{last}) with $L=L_1$ now shows that if $r=\vol (\bb M)/\vol (M)\leq k$ then
$$
\frac{\vol (M)}{|| \overline{M}||}\geq \frac{v_n-\eta}{v_n-\eta/2}(v_n-\eta/2)=v_n -\eta,
$$
and this concludes the proof of Theorem~\ref{main:teo}.

\bibliographystyle{amsalpha}
\bibliography{biblio}

\end{document}